\newtheorem{lemma}{Lemma}[section]
\newtheorem{teo}[lemma]{Theorem}
\newtheorem{prop}[lemma]{Proposition}
\newtheorem{cor}[lemma]{Corollary}
\theoremstyle{definition}
\newtheorem{defn}[lemma]{Definition}
\theoremstyle{remark}
\newtheorem{rem}[lemma]{Remark}
\newcommand{\matR} {\ensuremath {\mathbb{R}}}
\newcommand{\matQ} {\ensuremath {\mathbb{Q}}}
\newcommand{\matZ} {\ensuremath {\mathbb{Z}}}
\newcommand{\matC} {\ensuremath {\mathbb{C}}}
\newcommand{\matH} {\ensuremath {\mathbb{H}}}
\newcommand{\detr}{\ensuremath {\rm det}}
\newcommand{\SO}{\ensuremath {\rm SO}}
\newcommand{\SL}{\ensuremath {\rm SL}}
\newcommand{\GL}{\ensuremath {\rm GL}}
\newcommand{\Or}{\ensuremath {\rm O}}
\newcommand{\POr}{\ensuremath {\rm PO}}
\newcommand{\Isom}{\ensuremath {\rm Isom}}
\newcommand{\modu}{{\rm mod}}
\newcommand{\PSL}{\rm PSL}
\newcommand\reallywidehat[1]{%
\savestack{\tmpbox}{\stretchto{%
  \scaleto{%
    \scalerel*[\widthof{\ensuremath{#1}}]{\kern-.6pt\bigwedge\kern-.6pt}%
    {\rule[-\textheight/2]{1ex}{\textheight}}
  }{\textheight}%
}{0.5ex}}%
\stackon[1pt]{#1}{\tmpbox}%
}
\begin{document}
\title{Embedding arithmetic hyperbolic manifolds}
\author{Alexander Kolpakov, Alan W. Reid, Leone Slavich}
\date{}

\maketitle

\begin{abstract}
\noindent We prove that any arithmetic hyperbolic $n$-manifold 
of simplest type can either be \emph{geodesically embedded} into an 
arithmetic hyperbolic $(n+1)$-manifold or its universal $\modu~2$ Abelian
cover can. \end{abstract}

\section{Introduction}
Throughout the paper, all manifolds are assumed to be connected unless otherwise specifically stated. A complete, orientable, finite volume hyperbolic $n$-manifold $M$  {\em bounds geometrically} if $M$ is realized as the totally geodesic boundary of a complete, orientable, finite volume, hyperbolic $(n+1)$-manifold $W$. There has been recent interest in a number of problems related to geometric boundaries (c.f. \cite{KMT}, \cite{KS}, \cite{LR}, \cite{Sl0} and \cite{Sl}). 

One could ask for a weaker property, namely, that $M$ simply embeds totally geodesically
in a complete, orientable, finite volume, hyperbolic $(n+1)$-manifold $W$ (which was considered in \cite{Mar}). In this case cutting $W$ along $M$ either produces a single copy of $M$ that bounds geometrically (if $M$ happens to be separating in $W$), or two copies of $M$ that bound some hyperbolic $(n+1)$-manifold
(otherwise).

In order to state the main result of this note, let us introduce the following notation. For a finitely generated group $\Gamma$ we let $\Gamma^{(2)} = \langle \gamma^2 : \gamma\in \Gamma \rangle$. 
We also refer the reader to \S \ref{section:prelim} for the other necessary definitions of terms in Theorem \ref{main}.

\begin{teo}
\label{main}
Let $M={\matH}^n/\Gamma$ ($n\geq 2$) be an orientable arithmetic hyperbolic n-manifold of simplest type. 
\begin{enumerate}
\item If $n$ is even, then $M$ embeds as a totally geodesic submanifold of an orientable arithmetic hyperbolic $(n+1)$-manifold $W$. 
\item If $n$ is odd, the manifold $M^{(2)}={\matH}^n/\Gamma^{(2)}$ embeds as a totally geodesic submanifold of an orientable arithmetic hyperbolic $(n+1)$-manifold~$W$. 
\end{enumerate}
Moreover, when $M$ is not defined over $\mathbb{Q}$ (and is therefore closed), the manifold $W$ can be taken to be closed.
\end{teo}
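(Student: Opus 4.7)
My approach is to realize $M$ arithmetically and then extend by one orthogonal dimension. By hypothesis of simplest type, $\Gamma$ is commensurable with $\SO^+(f, \calO_k)$ for an admissible quadratic form $f$ of signature $(n,1)$ over a totally real number field $k$ (all non-identity Galois conjugates of $f$ are positive definite). I would set $F := f \oplus \langle a \rangle$ for a totally positive $a \in k$, obtaining an admissible form of signature $(n+1,1)$. The inclusion of underlying spaces induces a totally geodesic embedding $\matH^n \subset \matH^{n+1}$ and an inclusion of orthogonal groups $\iota \colon \Or(f) \hookrightarrow \Or(F)$, $\gamma \mapsto \gamma \oplus 1$. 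The stabilizer of $\matH^n$ in $\Or(F)$ is $\iota(\Or(f)) \times \langle \sigma \rangle$, where $\sigma$ is the reflection negating the extra coordinate.

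Next I would construct $W := \matH^{n+1}/\Lambda$ for a finite-index subgroup $\Lambda \leq \SO^+(F, \calO_k)$ chosen as follows: let $\Lambda_0 \leq \SO^+(F, \calO_k)$ be a principal congruence subgroup of level a prime ideal $\mathfrak{p}$ coprime to $2$ and small enough to guarantee torsion-freeness, and set $\Lambda := \langle \Lambda_0, \iota(\Gamma) \rangle$. The oddness of $\mathfrak{p}$ (equivalently, $-1 \not\equiv 1 \pmod{\mathfrak{p}}$) forces any element of $\Lambda$ stabilizing $\matH^n$ to be of the form $\iota(\gamma)$ with $\gamma \in \Or^+(f, \calO_k)$; the orientation-preservation requirement on $\matH^{n+1}$ then restricts further to $\gamma \in \SO^+(f, \calO_k)$. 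Because $\iota(\Gamma) \subset \Lambda$ and the principal congruence factor is absorbed into $\Gamma$ (by commensurability, for $\mathfrak{p}$ small enough), the stabilizer projects onto $\Gamma$ in the even case and onto $\Gamma^{(2)}$ in the odd case. Subgroup separability of arithmetic groups, applied at a further congruence refinement, then upgrades the resulting totally geodesic immersion $\matH^n/\Gamma \to W$ (resp.\ $\matH^n/\Gamma^{(2)} \to W$) to an embedding.

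The parity dichotomy stems from the behavior of the central element $-I$. When $n$ is even, $-I \in \SO(f)$ (since $(-1)^n = +1$), so all of $\Gamma$ is realized in the orientation-preserving stabilizer. When $n$ is odd, $-I \in \Or(f) \setminus \SO(f)$, and the only lift of such a reflection compatible with $\SO^+(F)$ is the twisted $-I \oplus (-1)$, excluded by the odd-level congruence condition; cascading through the group-theoretic structure, this obstruction leaves only the subgroup generated by squares, $\Gamma^{(2)}$, in the stabilizer's image. The main technical obstacle is the simultaneous fulfillment of torsion-freeness of $\Lambda$, orientability of $W$, the exact identification of the stabilizer with $\Gamma$ (resp.\ $\Gamma^{(2)}$), and embeddedness of the totally geodesic submanifold—each manageable separately but requiring a finely tuned congruence construction. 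Finally, for the closedness refinement: when $k \neq \matQ$, the admissible form $F$ has $[k:\matQ] \geq 2$ real places of mixed signature, so by Godement's compactness criterion $\SO^+(F, \calO_k)$ is cocompact in $\SO^+(n+1, 1)$, yielding a closed manifold $W$.
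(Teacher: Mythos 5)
There are two genuine gaps, both at the heart of what makes the theorem nontrivial.

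First, your construction $\Lambda=\langle \Lambda_0,\iota(\Gamma)\rangle$ tacitly requires $\Gamma\subset \SO(\mathrm{f},k)$, not merely that $\Gamma$ be commensurable with $\SO^+(\mathrm{f},\calO_k)$. If $\Gamma$ has entries outside $k$, then $\iota(\Gamma)$ need not even commensurate $\SO^+(\mathrm{F},\calO_k)$, and $\langle \Lambda_0,\iota(\Gamma)\rangle$ need not be a lattice at all; the paper illustrates exactly this failure with the block embedding of $\langle \Gamma_0(n),\tau_n\rangle$ into $\SL(3,\matR)$. This containment in the $k$-points is the true source of the even/odd dichotomy: for $n$ even $\SO(\mathrm{f})$ is centreless, so by Borel's maximality theorem every arithmetic subgroup lies in $\SO(\mathrm{f},k)$; for $n$ odd this fails (the paper gives an arithmetic Kleinian group whose image in $\SO^+(\mathrm{f},\matR)$ has an entry $-1/\sqrt{5}$), but $\Gamma^{(2)}$ always lies in the $k$-points by a lemma of Emery--Ratcliffe--Tschantz. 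Your explanation via the central element is not the mechanism, and moreover has the parity reversed: $\mathrm{f}$ has $n+1$ variables, so $-I\in\SO(\mathrm{f})$ precisely when $n$ is \emph{odd}, which is why $\SO(\mathrm{f})$ has nontrivial centre then. No congruence condition on $\Lambda_0$ can substitute for the $k$-rationality of $\Gamma$ itself.

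Second, torsion-freeness of $\Lambda$ cannot be arranged by ``a finely tuned congruence construction.'' Choosing $\Lambda_0$ torsion-free does not make $\langle \Lambda_0,\iota(\Gamma)\rangle$ torsion-free, and any further congruence subgroup of $\SO^+(\mathrm{F},\calO_k)$ meets $\Gamma$ in a proper subgroup, so one is forced back to adjoining all of $\Gamma$ and the problem recurs. The paper resolves this (Proposition 7.1) by a profinite argument: cohomological goodness of $\Gamma$ (which rests on virtual specialness of arithmetic groups of simplest type, i.e.\ Bergeron--Haglund--Wise together with Haglund--Wise and Agol) implies that the closure of $\Gamma$ in $\widehat{\Lambda}$ is torsion-free, while the closure of each of the finitely many torsion conjugacy classes of $\Lambda$ consists of torsion elements; separability then produces a finite-index torsion-free $\Lambda_1<\Lambda$ containing $\Gamma$. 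This is the paper's crucial new ingredient and is absent from your outline. Your remaining steps --- extending $\mathrm{f}$ to $\mathrm{F}=\mathrm{f}\oplus\langle a\rangle$ with $a$ totally positive, using GFERF/Scott to promote the immersion to an embedding, and invoking Godement for cocompactness when $k\neq\matQ$ --- do match the paper's route.
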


The reason for the even--odd distinction will become apparent below (see \S \ref{section:comments-adjoint}).  The last sentence in Theorem \ref{main} reflects, for example, the fact that certain closed arithmetic hyperbolic $3$-manifolds of simplest type arising from anisotropic quadratic forms defined over $\matQ$ can only embed in non-compact arithmetic hyperbolic $4$-manifolds, since by Meyer's Theorem (see \cite[\S 3.2, Corollary 2]{Se}) all integral quadratic forms of signature $(4,1)$ are isotropic.

It is known that when $n$ is even all arithmetic subgroups of 
$\Isom({\matH}^n)$ are of simplest type (see \cite{VS}) and so we deduce the following.

\begin{cor}
\label{alleven}
Assume that $n$ is even and let $M={\matH}^n/\Gamma$ be an orientable arithmetic hyperbolic n-manifold. 
Then $M$ embeds as a totally geodesic submanifold of an orientable arithmetic hyperbolic $(n+1)$-manifold 
$W$. 
\end{cor}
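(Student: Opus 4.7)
The corollary is an immediate consequence of Theorem \ref{main}(1) combined with the arithmeticity result invoked just above its statement, so my plan is essentially to stitch two facts together and make sure the hypotheses line up.

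First, I would invoke the theorem of Vinberg and Shvartsman \cite{VS}: when $n$ is even, every arithmetic subgroup of $\Isom(\matH^n)$ arises (up to commensurability) from the quadratic form construction. Consequently, any orientable arithmetic hyperbolic $n$-manifold $M = \matH^n / \Gamma$ with $n$ even is automatically of simplest type, and therefore meets the hypothesis of Theorem \ref{main}.

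Second, I would apply part (1) of Theorem \ref{main} verbatim. Because $n$ is even, the theorem delivers an orientable arithmetic hyperbolic $(n+1)$-manifold $W$ into which $M$ itself—not some $\modu~2$ Abelian cover—embeds as a totally geodesic submanifold, which is exactly the statement of the corollary. The even-versus-odd dichotomy in Theorem \ref{main} is what makes this direct: in odd dimensions one would only get such an embedding after passing to $M^{(2)}$, whereas in even dimensions no such passage is needed.

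There is no genuine obstacle here; the whole content of the corollary is packaged in Theorem \ref{main}(1) and \cite{VS}. If desired, one could append the remark, inherited from the last sentence of Theorem \ref{main}, that $W$ can in addition be taken closed whenever $M$ is not defined over $\matQ$ (in which case $M$ is itself closed), but this is a bonus rather than part of the deduction.
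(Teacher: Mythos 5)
Your proposal is correct and is exactly the paper's deduction: the authors also combine the result of Vinberg--Shvartsman (\cite{VS}) that for even $n$ every arithmetic subgroup of $\Isom(\matH^n)$ is of simplest type with part (1) of Theorem \ref{main}. Nothing further is needed.
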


An interesting question is to understand if the property of embedding geodesically is preserved under commensurabilty. While not providing definitive evidence for such a hypothesis, we give the first examples of commensurability classes of hyperbolic manifolds all of whose members are geodesically embedded.

It is also known that every non-compact, arithmetic, hyperbolic $n$-\linebreak manifold is of simplest type \cite{LiM}, so another corollary of Theorem \ref{main} is

\begin{cor}
\label{noncompact}
Let $M={\matH}^n/\Gamma$ ($n\geq 2$) be an orientable non-compact arithmetic hyperbolic $n$-manifold.
\begin{enumerate}
\item If $n$ is even, then $M$ embeds as a totally geodesic submanifold of an orientable arithmetic hyperbolic $(n+1)$-manifold $W$. 
\item If $n$ is odd, the manifold $M^{(2)}={\matH}^n/\Gamma^{(2)}$ embeds as a totally geodesic submanifold of an orientable arithmetic hyperbolic $(n+1)$-manifold~$W$. 
\end{enumerate}
\end{cor}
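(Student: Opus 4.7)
The plan is to derive Corollary \ref{noncompact} as an essentially immediate consequence of Theorem \ref{main}, using the classification result that replaces the hypothesis ``of simplest type'' with the hypothesis ``non-compact.'' Concretely, the only input I would need beyond Theorem \ref{main} is the theorem of Li--Millson \cite{LiM} cited in the paragraph preceding the statement, asserting that every non-compact arithmetic hyperbolic $n$-manifold is of simplest type.

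Given that, the proof reduces to a one-line deduction split into two cases according to the parity of $n$. First, I would let $M=\matH^n/\Gamma$ be an orientable non-compact arithmetic hyperbolic $n$-manifold, $n\geq 2$, and note by \cite{LiM} that $M$ is of simplest type. If $n$ is even, Theorem \ref{main}(1) immediately produces an orientable arithmetic hyperbolic $(n+1)$-manifold $W$ in which $M$ sits as a totally geodesic submanifold. If $n$ is odd, Theorem \ref{main}(2) provides the same conclusion for the universal $\modu\,2$ Abelian cover $M^{(2)}=\matH^n/\Gamma^{(2)}$ in place of $M$. This gives precisely the two statements in the corollary.

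There is no real obstacle in this argument: all the substantive work has been done in the proof of Theorem \ref{main}, and the role of the corollary is simply to translate the ``simplest type'' hypothesis into the more tangible geometric condition of non-compactness via \cite{LiM}. Note, in particular, that the final sentence of Theorem \ref{main} concerning closedness of $W$ plays no role here, since the hypothesis of non-compactness forces $M$ to be defined over $\matQ$ and so that extra clause does not enter.
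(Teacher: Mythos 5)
Your proposal is correct and matches the paper exactly: the corollary is stated immediately after the observation that every non-compact arithmetic hyperbolic $n$-manifold is of simplest type by \cite{LiM}, so it follows directly from Theorem \ref{main}. Your parenthetical remark about the final clause of Theorem \ref{main} is also consistent with the paper's convention that manifolds not defined over $\mathbb{Q}$ are closed.
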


We can also address the question of bounding. A more precise version of the theorem below (including odd dimensions) is given in \S \ref{section:proof-main2}.

\begin{teo}
\label{main2}
Suppose that $n\geq 2$ is even and let $M={\matH}^n/\Gamma$ be an orientable arithmetic hyperbolic n-manifold of simplest type which double covers a non-orientable hyperbolic manifold. Then $M$ bounds geometrically.\end{teo}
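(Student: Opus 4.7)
The strategy is to realise the non-orientable quotient $N=M/\tau$ as a one-sided, totally geodesic submanifold of an orientable arithmetic hyperbolic $(n+1)$-manifold $V$, and then to cut $V$ open along $N$. Here $\tau\in\Isom(M)$ denotes the free, orientation-reversing involution that realises the given double cover $M\to N$. Lifting to $\matH^n$ we obtain a discrete subgroup $\Gamma'\subset\Isom(\matH^n)$ containing $\Gamma$ with index two, whose non-trivial coset consists of orientation-reversing isometries, and with $N=\matH^n/\Gamma'$.

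The core step is to rerun the construction behind Theorem~\ref{main} on $\Gamma'$ rather than on $\Gamma$. That construction places $\Gamma$ inside an arithmetic lattice $\Lambda$ of $\Isom^+(\matH^{n+1})$ derived from a form $Q=q\perp\langle d\rangle$, where $q$ of signature $(n,1)$ defines the commensurability class of $M$. The orthogonal direct sum identifies $\Or(q)\times\{\pm 1\}$ with the $\Or(Q)$-stabiliser of the totally geodesic copy of $\matH^n$, and the map
\[
g\longmapsto \bigl(g,\,\detr(g)\bigr)
\]
sends orientation-preserving $g\in\Gamma$ to $(g,+1)$ and orientation-reversing $g\in\Gamma'\setminus\Gamma$ to $(g,-1)$; this is a discrete, faithful, orientation-preserving embedding of $\Gamma'$ into $\SO^+(Q)$ whose image extends $\Gamma\subset\Lambda$ to an index-two overgroup $\Lambda'\supset\Lambda$ (after replacing $\Lambda$, if necessary, by a finite-index subgroup stable under conjugation by the new generator). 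The quotient $V=\matH^{n+1}/\Lambda'$ is then an orientable, finite-volume arithmetic hyperbolic $(n+1)$-manifold, closed whenever $q$ is $\matQ$-anisotropic, which contains $N$ as a totally geodesic submanifold. Since each element of $\Lambda'\setminus\Lambda$ swaps the two local sides of $\matH^n$, the submanifold $N\subset V$ is one-sided.

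The conclusion is now purely topological. Because $V$ is orientable while $N$ is not, the normal line bundle of $N$ in $V$ is isomorphic to the orientation bundle of $N$; in particular a tubular neighbourhood of $N$ is a twisted $I$-bundle whose boundary is the orientation double cover of $N$, which is canonically identified with $M$ via $\tau$. Taking the metric completion $W$ of $V\setminus N$ therefore yields a connected, orientable, complete, finite-volume hyperbolic $(n+1)$-manifold whose totally geodesic boundary is $M$, proving the theorem. The main obstacle in this outline is the arithmetic book-keeping of the middle step: one must verify that $\Lambda'$ can be taken commensurable with the specific lattice produced by the proof of Theorem~\ref{main}, and that $\Gamma'$ is precisely the stabiliser of $\matH^n$ in $\Lambda'$, so that the totally geodesic submanifold one obtains is $N$ itself rather than a further finite quotient. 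In the simplest-type setting both points reduce to the fact that the decomposition $Q=q\perp\langle d\rangle$ is defined over the same ring of integers as $q$, so the separability arguments underlying Theorem~\ref{main} apply verbatim to $\Gamma'$.
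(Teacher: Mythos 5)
Your proposal is correct and follows essentially the same route as the paper's proof: pass to the index-two overgroup uniformising $N$, embed it orientation-preservingly into a lattice in dimension $n+1$ via the determinant twist $g\mapsto\mathrm{diag}(\det g,\,g)$ (the paper's Corollary~\ref{improved_embedding}), invoke separability to embed $N$ totally geodesically in an orientable finite cover, and cut along the one-sided $N$ so that the boundary of the twisted $I$-bundle neighbourhood is the orientation double cover $M$ (the paper's Lemma~\ref{untwisting_nonorientable}, quoted from \cite{LR}). The ``arithmetic book-keeping'' you flag at the end is exactly what the paper's Corollary~\ref{simplestembed} and Proposition~\ref{torsionfree_contain} supply, using that for $n$ even the relevant groups lie in the $k$-points.
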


This is interesting even in dimension $2$ where we provide new examples of arithmetic hyperbolic 2-manifolds that bound geometrically (see \S \ref{section:surfaces}).

Also, in dimension $3$, in \cite{KS2} and \cite{Sl0} certain arithmetic hyperbolic link complements are shown to bound geometrically, and in \cite{Sl} it is shown that the figure-eight knot complement bounds geometrically. Our techniques provide other examples.

\begin{cor}
\label{noncompact2}
Let $M={\matH}^3/\Gamma$ be a hyperbolic 3-manifold which is a finite cover of a Bianchi orbifold 
${\matH}^3/\PSL(2,O_d)$. Then $M$ embeds as a totally geodesic submanifold of an orientable arithmetic hyperbolic $4$-manifold.
\end{cor}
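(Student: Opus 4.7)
The strategy is to combine Theorem~\ref{main} with the explicit quadratic-form description of Bianchi groups and a separability argument inside the ambient arithmetic $4$-lattice.

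First, since $M$ is a non-compact arithmetic hyperbolic $3$-manifold, it is of simplest type by \cite{LiM}, and Theorem~\ref{main}(2) immediately produces an orientable arithmetic hyperbolic $4$-manifold containing $M^{(2)}=\matH^{3}/\Gamma^{(2)}$ as a totally geodesic submanifold. The content of the corollary is the stronger claim that $M$ itself (and not only $M^{(2)}$) embeds, so the task is to remove the $(2)$-reduction in the cusped Bianchi setting.

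To do so, I would work at the orbifold level with the quadratic form. Via the exceptional isomorphism $\PSL(2,\matC)\cong\SO^{+}(3,1)$, the Bianchi group $\PSL(2,O_d)$ is commensurable with $\SO^{+}(q,\matZ)$ for the rational form $q=\langle 1,1,1,-d\rangle$ of signature $(3,1)$. Setting $Q=q\oplus\langle 1\rangle$, a form over $\matQ$ of signature $(4,1)$, the inclusion $\SO^{+}(q,\matZ)\hookrightarrow\SO^{+}(Q,\matZ)$ realizes the Bianchi $3$-orbifold as a totally geodesic suborbifold of the $4$-orbifold attached to $Q$. The goal then becomes to find a torsion-free, orientation-preserving, finite-index subgroup $\Lambda\leq\SO^{+}(Q,\matZ)$ such that $\Lambda\cap\SO^{+}(q,\matZ)=\Gamma$ and $M$ is two-sided in $W=\matH^{4}/\Lambda$. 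Torsion-freeness and the intersection equality are standard using principal congruence subgroups of high enough level combined with subgroup separability of $\SO^{+}(q,\matZ)$ inside $\SO^{+}(Q,\matZ)$, which holds in the cusped low-dimensional arithmetic setting.

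The main obstacle is the two-sidedness of $M$ in $W$: this is exactly what forced the $\Gamma\to\Gamma^{(2)}$ passage in the proof of Theorem~\ref{main}(2) for general odd $n$. In the $\matQ$-rational Bianchi picture I would resolve it by exhibiting an index-$2$ subgroup $\widetilde\Lambda\leq\SO^{+}(Q,\matZ)$, defined as the kernel of an orientation-type character coming from the extra coordinate of $Q$, whose restriction to $\SO^{+}(q,\matZ)$ is trivial. Then $\Gamma$ automatically lies inside $\widetilde\Lambda$, and the $(2)$-reduction becomes unnecessary. A suitable torsion-free congruence subgroup of $\widetilde\Lambda$ of sufficiently high level then produces the $\Lambda$ realizing the desired embedding $M\hookrightarrow W$. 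Verifying the triviality of this character on the Bianchi block $\SO^{+}(q,\matZ)$, and checking that it is available only because $q$ is the $\matQ$-rational Bianchi form rather than a generic odd-dimensional admissible form, is the technical heart of the argument.
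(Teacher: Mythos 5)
Your overall strategy --- realize $\PSL(2,O_d)$ inside the integral points of a quaternary quadratic form over $\matQ$ and extend that form by one positive square --- is exactly the paper's route (which cites \cite[Theorem 2.1]{JM} for the identification of $\PSL(2,O_d)$ with a subgroup of $\SO(\mathrm{f},\matZ)$ for a certain \emph{isotropic} quaternary form $\mathrm{f}$). But two points in your write-up are genuinely off. First, the form: $q=\langle 1,1,1,-d\rangle$ is anisotropic over $\matQ$ whenever $d$ is not a sum of three rational squares (e.g.\ $d=7$), so $\SO^{+}(q,\matZ)$ would be cocompact and cannot be commensurable with the non-cocompact group $\PSL(2,O_d)$; the correct forms are isotropic (compare the paper's example $x_0x_1+x_2^2+x_3^2$ for $d=1$). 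Moreover ``commensurable with'' is not the statement you need: the entire odd-dimensional difficulty is that a group merely commensurable with the integral points need not lie in the $\matQ$-points (see the $\tau_n$ examples in \S\ref{section:prelim}), so you must invoke an actual containment $\Gamma<\PSL(2,O_d)\hookrightarrow\SO(\mathrm{f},\matZ)$.

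Second, and more seriously, you misdiagnose why Theorem~\ref{main}(2) passes to $\Gamma^{(2)}$. It has nothing to do with two-sidedness or orientation. For $n$ odd the group $\SO(\mathrm{f})$ has non-trivial centre, so an arithmetic subgroup need not lie in $\SO(\mathrm{f},k)$, and Proposition~\ref{subgroupsofkpoints} (whose proof uses that elements of $\mathcal{G}(k)\subset\mathcal{H}(k)$ commensurate $\mathcal{H}(R_k)$) then fails to apply; $\Gamma^{(2)}$ is used because it is guaranteed to lie in the $k$-points by \cite[Lemma 10]{ERT}. Consequently your proposed ``orientation-type character'' on $\SO^{+}(Q,\matZ)$, which you describe as the technical heart of the argument, is aimed at a non-obstruction and does not substitute for the containment in the rational points. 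Once $\Gamma<\SO^{+}(\mathrm{f},\matZ)$ is in hand, no such device is needed: $\Gamma$ lies in the arithmetic group $\SO^{+}(\mathrm{g},\matZ)$ for $\mathrm{g}=y^2+\mathrm{f}$ (orientability of the ambient manifold being handled by Corollary~\ref{improved_embedding}), and one concludes exactly as in the proof of Theorem~\ref{main} via Proposition~\ref{torsionfree_contain} and GFERF (Theorem~\ref{teo:standard-gferf}). Note also that your plan to get torsion-freeness from ``principal congruence subgroups of high enough level'' containing $\Gamma$ does not work as stated --- such a subgroup will not contain $\Gamma$ --- which is precisely why the paper proves Proposition~\ref{torsionfree_contain} using separability and goodness.
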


In many cases we are also able to prove that $M$ bounds geometrically. Unlike the constructions of \cite{KS}, \cite{Sl0} and \cite{Sl}, our methods are algebraic and similar to \cite{LR}. Indeed, Theorems \ref{main} and \ref{main2} extend the results in \cite{LR} which provide for all $n$ examples of orientable arithmetic hyperbolic $n$-manifolds of simplest type that embed (in fact, bound geometrically) in an orientable arithmetic hyperbolic $(n+1)$-manifold. The crucial new ingredient that we rely on is the recent work of Agol and Wise on proving separability, as applied in \cite{BHW} in the setting of arithmetic groups of simplest type.
\section*{Acknowledgements}
{The first and third authors were partially supported by the Italian FIRB project ``Geometry and topology of low-dimensional manifolds'', RBFR10\-GHHH. The first author wishes to thank William Jagy for his introduction into the vast body of literature on quadratic forms and related topics. The second author was supported in part by an NSF grant and The Wolfensohn Fund administered through the Institute for Advanced Study. He would also like to thank the I.A.S and M.S.R.I. for their hospitality whilst working on this project. He also wishes to thank Yves Benoist (Universit\'{e} Paris-Sud) and Gopal Prasad (University of Michigan at Ann Arbor) for helpful conversations and correspondence. The third author was supported by a grant from "Scuola di scienze di base Galileo Galilei", and wishes to thank the Department of Mathematics, Universit\`a di Pisa, for their hospitality whilst working on this project. He also wishes to thank Bruno Martelli and Stefano Riolo (Universit\`a di Pisa) for several helpful conversations. All three authors thank the I.C.T.P., Trieste, Italy, and the organizers of  the ``Advanced School on Geometric Group Theory and Low-Dimensional Topology: Recent Connections and Advances'' held at the I.C.T.P. in June 2016, which allowed them to work on this project at an early stage.}

\section{Preliminaries on algebraic and arithmetic groups}\label{section:prelim}

We recall some basic terminology about linear algebraic groups. We refer the reader to \cite{PR} for further details.

Throughout this section $G\subset \GL(n,\matC)$ will be a semi-simple linear algebraic group defined over a totally real number field $k$ (with ring of integers $R_k$). Associated with $G$ is its adjoint algebraic group 
$\overline{G}$, which is also defined over $k$, and the homomorphism $\pi: G\rightarrow \overline{G}$ is a central $k$-isogeny (see \cite[Section 2.6.1]{Ti}).

For $U$ a subring of $\matC$ we let $G(U)=G\cap \GL(n,U)$ denote the subgroup of $U$-points of $G$. By an {\em arithmetic subgroup of $G$} we mean a subgroup of $G(\matR)$ commensurable with $G(R_k)$. The proposition below is a useful starting point for what is to follow.

\begin{prop}
\label{subgroupsofkpoints}
Let $G$ and $H$ be semi-simple linear algebraic groups defined over $k$ with $G\subset H$. Let $\Gamma < G(k)$ be an arithmetic subgroup. Then there exists an arithmetic subgroup $\Lambda < H(k)$ with 
$\Gamma < \Lambda$.
\end{prop}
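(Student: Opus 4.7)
The plan is to exploit the commensurability of $\Gamma$ with $G(R_k)$ together with the inclusion $G\subset H$ in order to produce an $R_k$-lattice in the ambient representation that is preserved by all of $\Gamma$; the desired $\Lambda$ is then its stabilizer in $H(k)$. First, I would fix an embedding so that $G\subset H\subset \GL(N,\matC)$ is realized over $k$, and take $L=R_k^N$ as the standard $R_k$-lattice in $k^N$, so that $H(R_k)=H\cap \GL(L)$ and likewise for $G$. Since $\Gamma$ is commensurable with $G(R_k)$, the subgroup $\Gamma_0:=\Gamma\cap G(R_k)$ has finite index in $\Gamma$; choose coset representatives $\delta_1,\dots,\delta_m$ of $\Gamma_0$ in $\Gamma$, all lying in $G(k)\subset H(k)$.

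Next, I would define the $R_k$-module
\[
L':=\sum_{i=1}^{m}\delta_i L\subset k^N.
\]
Because each $\delta_i$ is an invertible $k$-rational matrix, $L'$ is a finite sum of full $R_k$-lattices of $k^N$, hence itself a finitely generated $R_k$-lattice of full rank, commensurable with $L$. A direct coset computation shows that $\Gamma$ preserves $L'$: for any $\gamma\in\Gamma$ and any $j$, write $\gamma\delta_j=\delta_{\sigma(j)}g_j$ with $\sigma$ a permutation of $\{1,\dots,m\}$ and $g_j\in\Gamma_0\subset\GL(L)$. Then $\gamma\delta_j L=\delta_{\sigma(j)}g_j L=\delta_{\sigma(j)}L$, and summing over $j$ gives $\gamma L'=L'$.

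Finally, I would set $\Lambda:=\{h\in H(k):hL'=L'\}$. By construction $\Gamma\subset\Lambda$, so only arithmeticity of $\Lambda$ remains. This is a standard commensurability argument: since $L'$ and $L$ are commensurable $R_k$-lattices, there exists a non-zero $c\in R_k$ with $cL\subset L'\subset c^{-1}L$, and the simultaneous stabilizer in $H(k)$ of $L$ and $L'$ has finite index in each of $\Lambda$ and $H(R_k)$. Hence $\Lambda$ is commensurable with $H(R_k)$, i.e., an arithmetic subgroup of $H(k)$.

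The only real obstacle is the bookkeeping required to upgrade the $\Gamma_0$-invariance of $L$ to a $\Gamma$-invariance of some lattice; everything else (the spanning property of $L'$, the commensurability with $L$, and the resulting commensurability of stabilizers) is routine linear algebra over $R_k$. Note that the argument never uses semi-simplicity of $G$ or $H$ beyond the fact that both are embedded as algebraic $k$-subgroups of $\GL(N)$, so the proposition in fact holds in greater generality.
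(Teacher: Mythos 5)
Your proof is correct, but it takes a genuinely different route from the paper's. The paper never chooses a lattice: it sets $\Lambda_1=\bigcap_{\gamma\in\Gamma}\gamma H(R_k)\gamma^{-1}$, notes that this is a finite intersection of mutually commensurable arithmetic subgroups of $H(k)$ (finite because $\Gamma\cap H(R_k)$ has finite index in $\Gamma$, and each conjugate is arithmetic because elements of $H(k)$ commensurate $H(R_k)$), hence arithmetic, and observes that $\Lambda_1$ is normalized by $\Gamma$, so that $\Lambda=\Gamma\cdot\Lambda_1$ is the required group. You instead replace the intersection of conjugate stabilizers by the stabilizer of the single $\Gamma$-invariant lattice $L'=\sum_i\delta_i L$. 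Your version has the small advantage that $\Lambda$ contains $\Gamma$ by construction, with no need for the normalization-and-product step, and it is more self-contained: the fact that $H(k)$-conjugation commensurates $H(R_k)$, which the paper invokes as known, is exactly what your coset computation and the finiteness of lattices between $cL$ and $c^{-1}L$ establish from scratch. The paper's argument is shorter and coordinate-free. Both are standard, and your closing remark is accurate: with the paper's definition of arithmetic subgroup (commensurability with the $R_k$-points in a fixed $k$-embedding), neither argument uses semi-simplicity, so the proposition holds for arbitrary linear algebraic $k$-groups.
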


\begin{proof} Since $\Gamma$ is arithmetic it is commensurable with $G(R_k)$ and so $\Gamma \cap G(R_k)$ is a subgroup of finite index in $\Gamma$. By definition $H(R_k)$ is an arithmetic subgroup of $H$, and evidently $\Gamma \supset \Gamma \cap H(R_k) \supset \Gamma \cap G(R_k)$, and so $\Gamma \cap H(R_k)$ is an arithmetic subgroup of $\Gamma$.

Set $\Lambda_1 = \bigcap_{\gamma\in \Gamma}\gamma H(R_k)\gamma^{-1}$. Since $\Gamma<G(k) < H(k)$, each element $\gamma\in\Gamma$ commensurates $H(R_k)$, and so by the previous paragraph $\Lambda_1$ is a finite intersection of $H(k)$-conjugates of $H(R_k)$, which are all therefore commensurable arithmetic subgroups of $H$. Hence $\Lambda_1$ is an arithmetic subgroup of $H$, and moreover $\Lambda_1$ is normalized by
$\Gamma$. It follows that $\Lambda = \Gamma \cdot \Lambda_1$ is the required arithmetic subgroup of $H$ contained in $H(k)$. \end{proof}

There are many situations where all arithmetic lattices of $G$ are contained in $G(k)$. For example, it is a result of Borel \cite{Bo} (see also \cite{Allan}) that this takes place if $G$ is a centreless linear algebraic group, and so this holds in particular if $G=\overline{G}$.

However, this is often not the case, which we illustrate in the following example.\\[\baselineskip]
\noindent{\bf Example.}~Let $\Gamma = \SL(2,\matZ)$, and let $n>1$ be a square-free integer. We
can construct arithmetic groups $\Gamma_n < \SL(2, \mathbb{R})$ commensurable with $\SL(2,\matZ)$ that are not subgroups of $\SL(2,\matQ)$.  

Let $\Gamma_0(n)$ be the subgroup of $\SL(2,\matZ)$ consisting of matrices which are congruent to an upper triangular matrix modulo $n$. Then the element 
$$\tau_n = \begin{pmatrix}0 & 1/\sqrt{n}\\ & \\ -\sqrt{n} & 0 \end{pmatrix}$$
normalizes $\Gamma_0(n)$ and so the group $\Gamma_n = \langle \Gamma_0(n), \tau_n \rangle$
is an arithmetic subgroup of $\SL(2,\matR)$ commensurable with $\Gamma$ and obviously not contained in 
$\SL(2,\matQ)$.

Using this example, we can easily show that a more general statement of Proposition \ref{subgroupsofkpoints} for embedding arithmetic groups in other arithmetic groups does not hold.\\[\baselineskip]
\noindent{\bf Example:} Embed $\SL(2,\matR) < \SL(3,\matR)$ as shown below:
$$\left( \begin{array}{ccc}   \SL(2,\matR) &| & 0 \\      
             
 0     & |  &   1 \end{array} \right),$$ 

\noindent and consider the arithmetic subgroup $\Gamma_n < \SL(2,\matR)$ 
constructed above with $\Gamma_n \hookrightarrow \SL(3,\matR)$ embedded as
$$\left( \begin{array}{ccc}   \Gamma_n &| & 0 \\      
             
 0     & |  &   1 \end{array} \right).$$ 

Since $\SL(3,\matC)$ is centreless, it follows from the aforementioned result of Borel that this subgroup of $\SL(3,\matR)$ cannot be a subgroup of any arithmetic subgroup of $\SL(3,\matR)$ commensurable with $\SL(3,\matZ)$.  However, it can also be seen directly as follows. 

With $\tau_n$ the involution as above, set
$$\gamma_n = \left( \begin{array}{ccc}   \tau_n &| & 0 \\      
             
 0     & |  &   1 \end{array} \right).$$

If $\gamma_n \in \Lambda$, an arithmetic group commensurable with $\SL(3,\matZ)$, then $\Lambda$ contains a normal subgroup $N$ which is a finite index subgroup of $\SL(3,\matZ)$. Hence $N$ contains an element of the form:

$$x=\begin{pmatrix}1 & 0 & P\\ 0 & 1 & 0 \\ 0 & 0 & 1\end{pmatrix},$$
for some $P\in\matZ$. Since $N$ is a normal subgroup of $\Lambda$, we have that $\gamma_n x\gamma_n^{-1} \in N$. However,

$$\gamma_n x\gamma_n^{-1} = 
\begin{pmatrix} 1 & 0 & 0\\ 0 & 1 & -P\sqrt{n} \\ 0 & 0 & 1 \end{pmatrix},$$
which is never an element of $\SL(3,\matZ)$.

\section{Arithmetic subgroups of orthogonal groups}

In this section we adapt some of the discussion of \S \ref{section:prelim} to (special) orthogonal groups of quadratic forms.

\subsection{Quadratic forms and arithmetic lattices}\label{section:quad-forms-arithm-lat}
Let $k$ be a totally real number field of degree $d$ over $\matQ$ equipped with a fixed embedding into
$\matR$ which we refer to as the identity embedding, and denote the ring of integers of $k$ by $R_k$.  Let $V$ be an $(n+1)$-dimensional vector space over $k$ equipped with a non-degenerate quadratic form $\mathrm{f}$ defined over $k$ which has signature $(n,1)$ at the identity embedding, and signature $(n+1,0)$ at the remaining $d-1$ embeddings. 

Given this, the quadratic form $\mathrm{f}$ is equivalent over $\matR$ to the quadratic form $x_0^2+x_1^2+\cdots +x_{n-1}^2-x_n^2$, and for any non-identity Galois embedding $\sigma:k\rightarrow \matR$, the quadratic form $\mathrm{f}^\sigma$ (obtained by applying $\sigma$ to each entry of $\mathrm{f}$) is equivalent over $\matR$  to $x_0^2+x_1^2+\cdots +x_{n-1}^2+x_n^2$. We call such a quadratic form {\em admissible}.

Let $F$ be the symmetric matrix associated with the quadratic form $\mathrm{f}$ and let $\Or(f)$ (resp. $\SO(f)$) denote the linear algebraic groups defined over $k$ described as:
\begin{align*}
\Or(f)&=\{X\in\GL(n+1,\matC):X^tFX=F\}\quad\hbox{and}\\
\SO(f)&=\{X\in\SL(n+1,\matC):X^tFX=F\}.
\end{align*}

For a subring $L\subset \matC$, we denote the $L$-points of $\Or(f)$ (resp. $\SO(f)$) by $\Or(f,L)$ (resp. $\SO(f,L)$). An {\em arithmetic lattice} in $\Or(f)$ (resp. $\SO(f)$) is a subgroup $\Gamma<\Or(f)$ commensurable with $\Or(f,R_k)$ (resp. $\SO(f,R_k)$). Note that an arithmetic subgroup of $\SO(f)$ is an arithmetic subgroup of $\Or(f)$, and an arithmetic subgroup $\Gamma< \Or(f)$ determines an arithmetic subgroup $\Gamma\cap \SO(f)$ in $\SO(f)$.

Apart from the case of $n = 3$, $\SO(f)$ is a connected absolutely almost simple semi-simple algebraic group defined over $k$. It is centreless when $n$ is even, but has a non-trivial centre when $n$ is odd.

\section{Arithmetic groups of simplest type} 

Next we describe a particular construction of arithmetic subgroups of the Lie group $\Isom({\matH}^n)$ via admissible quadratic forms.

\subsection{Some notation}\label{section:notation}

Let $J_n$ denote both the quadratic form $x_0^2+x_1^2+\cdots +x_{n-1}^2-x_n^2$, and the diagonal matrix associated with the form. We identify hyperbolic space ${\matH}^n$ with the upper half-sheet $\{x\in {\matR}^{n+1} : J_n = -1, x_n>0\}$ of the hyperboloid $\{x\in {\matR}^{n+1} : J_n = -1\}$, and letting
$$\Or(n,1)=\{X\in\GL(n+1,\matR) : X^tJ_nX=J_n\},$$

\noindent
we can identify $\Isom({\matH}^n)$ with the subgroup of $\Or(n,1)$ preserving the upper half-sheet of the hyperboloid $\{x\in {\matR}^{n+1} : J_n = -1\}$, denoted by
$\Or^{+}(n,1)$.

Moreover $\Or^{+}(n,1)$ is isomorphic to $\POr(n,1)$ (the central quotient of $\Or(n,1)$). With this notation, $\Isom^+({\matH}^n) = \SO^+(n,1)$, the index $2$ subgroup in $\Or^{+}(n,1)$, which is the connected component of the identity of $\Or(n,1)$.

Note that when $n$ is even, $\Isom({\matH}^n)$ can be identified with $\SO(n,1)$ by observing that there exists an isomorphism $\varphi : \Or^{+}(n,1)\rightarrow \SO(n,1)$ defined by $\varphi(T)=(\detr T)\, T$, which restricts to the identity on $\SO^+(n,1)$.

\subsection{Some arithmetic groups}\label{section:arithm-gps}

To pass to arithmetic subgroups of $\Or^{+}(n,1)$ and $\SO^{+}(n,1)$, we first note from \S \ref{section:quad-forms-arithm-lat} that, given an admissible quadratic form defined over $k$ of signature $(n,1)$, there exists $T\in \GL(n+1,\matR)$ such that $T^{-1}\Or(f,\matR)T = \Or(n,1)$. 

A subgroup $\Gamma < \Or^+(n,1)$ is called {\em arithmetic of simplest type} if $\Gamma$ is commensurable with the image in $\Or^+(n,1)$ of an arithmetic subgroup of $\Or(f)$ under the conjugation map above. An arithmetic hyperbolic $n$-manifold $M={\matH}^n/\Gamma$ is called {\em arithmetic of simplest type} if $\Gamma$ is. The same set-up using special orthogonal groups constructs orientation-preserving arithmetic groups of simplest type (and orientable arithmetic hyperbolic $n$-manifolds of simplest type).

We will also use the following notation. For a subring $R\subset \matR$, let $\Or^+(f,R)$ (resp. $\SO^+(f,R)$) be the subgroup of $\Or(f,R)$ (resp. $\SO(f,R)$) that leave both components of the cone $\{x\in {\matR}^{n+1} : \mathrm{f}(x) < 0\}$ invariant.  Note that since $T$ provides an equivalence of the form $\mathrm{f}$ with $J_n$, it follows that $\mathrm{f}(x)=J_n(Tx)$, and $T^{-1}\Or^+(\mathrm{f},\matR)T = \Or^+(n,1)$.

Notice that when $n$ is even, arithmetic subgroups of simplest type in $\Or^+(n,1)$ can be identified with the image of an arithmetic subgroup of $\SO(f)$ (using the discussion in \S \ref{section:notation}). Moreover, since $\SO(f)$ is centreless, as
remarked above, by \cite{Bo}, all of its arithmetic subgroup are contained in $\SO(f,k)$. Therefore, when $n$ is even any arithmetic group of simplest type in $\Or^+(n,1)$ is constructed from an arithmetic subgroup of $\SO(f,k)$ for some admissible quadratic form $\mathrm{f}$.


\subsection{Locating arithmetic subgroups}\label{section:locating-arithm-gps}

In this subsection we prove the following result that will be an important step towards proving Theorem \ref{main}.

\begin{prop}
\label{orthogonalgroupsembedded}
Let $\mathrm{f}$ be an admissible quadratic form of signature $(n,1)$ defined over a totally real field $k$, $\mathrm{g}$ an admissible quadratic form of signature $(n+1,1)$ defined over the same field $k$ and suppose that $\mathcal{G} = \SO(f) < \mathcal{H} = \SO(g)$. Let $\Gamma$ be an arithmetic subgroup of $\mathcal{G}$. Then:
\begin{enumerate}
\item If $n$ is even, $\Gamma$ embeds in an
arithmetic subgroup of $\mathcal{H}$.
\item If $n$ is odd, $\Gamma^{(2)}$ embeds in an
arithmetic subgroup of $\mathcal{H}$.\end{enumerate}
\end{prop}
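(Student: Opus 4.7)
The plan is to reduce both cases to Proposition~\ref{subgroupsofkpoints} applied to the inclusion $\mathcal{G}\subset\mathcal{H}$; this requires placing $\Gamma$ (in the even case) or its finite-index subgroup $\Gamma^{(2)}$ (in the odd case) inside $\mathcal{G}(k)$.

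When $n$ is even, $\mathcal{G}=\SO(f)$ is centreless, as recorded at the end of \S\ref{section:quad-forms-arithm-lat}. The theorem of Borel \cite{Bo} quoted in \S\ref{section:prelim} then guarantees that every arithmetic subgroup of $\mathcal{G}$ lies in $\mathcal{G}(k)$. Hence $\Gamma\subset\mathcal{G}(k)\subset\mathcal{H}(k)$, and Proposition~\ref{subgroupsofkpoints} immediately yields an arithmetic subgroup $\Lambda<\mathcal{H}(k)$ with $\Gamma\subset\Lambda$.

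For the odd case the centre of $\mathcal{G}$ is $Z=\{\pm I\}$, which is exactly what prevents a direct application of Borel's theorem to $\mathcal{G}$. First I would pass to the adjoint group via the central $k$-isogeny $\pi\colon\mathcal{G}\to\overline{\mathcal{G}}=\mathcal{G}/Z$; since $\overline{\mathcal{G}}$ is centreless, Borel's theorem gives $\pi(\Gamma)\subset\overline{\mathcal{G}}(k)$. For any $\gamma\in\Gamma$, the fibre of $\pi$ over $\pi(\gamma)$ is $\{\gamma,-\gamma\}$, so in particular $\gamma$ is algebraic over $k$. For each $\sigma\in\mathrm{Gal}(\bar{k}/k)$ the identity $\pi(\sigma(\gamma))=\sigma(\pi(\gamma))=\pi(\gamma)$ forces $\sigma(\gamma)=\epsilon(\sigma)\,\gamma$ with $\epsilon(\sigma)\in\{\pm 1\}$, whence $\sigma(\gamma^{2})=\epsilon(\sigma)^{2}\gamma^{2}=\gamma^{2}$. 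Thus $\gamma^{2}\in\mathcal{G}(k)$, and consequently $\Gamma^{(2)}\subset\mathcal{G}(k)$. Because $\Gamma$ is finitely generated, the elementary abelian $2$-group quotient $\Gamma/\Gamma^{(2)}$ is finite, so $\Gamma^{(2)}$ is a finite-index and therefore arithmetic subgroup of $\mathcal{G}$. Proposition~\ref{subgroupsofkpoints} then produces an arithmetic subgroup of $\mathcal{H}$ containing $\Gamma^{(2)}$.

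The main obstacle is the sign ambiguity in the odd case. The map $\epsilon\colon\mathrm{Gal}(\bar{k}/k)\to\mu_{2}$ is a $1$-cocycle whose class in $H^{1}(k,\mu_{2})=k^{*}/(k^{*})^{2}$ is precisely the Galois-cohomological obstruction to lifting $\pi(\gamma)\in\overline{\mathcal{G}}(k)$ to $\mathcal{G}(k)$; the decisive observation is that this obstruction has order dividing $2$, so passing to $\gamma^{2}$ kills it. Once this point is made precise, the reduction to Proposition~\ref{subgroupsofkpoints} is formal, and it is exactly this $2$-torsion phenomenon that is recorded in the even--odd dichotomy of the statement (and, ultimately, of Theorem~\ref{main}).
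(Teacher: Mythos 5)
Your proposal is correct and follows essentially the same route as the paper: both cases are reduced to Proposition~\ref{subgroupsofkpoints} by first placing $\Gamma$ (respectively $\Gamma^{(2)}$) inside $\mathcal{G}(k)$, using Borel's maximality theorem for the centreless group when $n$ is even. The only difference is that for $n$ odd the paper simply cites the proof of \cite[Lemma 10]{ERT} for the containment $\Gamma^{(2)}<\mathcal{G}(k)$, whereas you supply a correct self-contained Galois-descent argument through the adjoint group, which is in substance the same computation.
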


\begin{proof} Part (1) follows from Proposition \ref{subgroupsofkpoints} and the previous remark that for $n$ even, all arithmetic subgroups of $\mathcal{G}$ are contained in $\mathcal{G}(k)$. The second part follows from the proof of \cite[Lemma 10]{ERT} which shows that $\Gamma^{(2)}$ is a subgroup of $\mathcal{G}(k)$, and then we argue as in the even-dimensional case.\end{proof}

Indeed, both statements of Proposition \ref{orthogonalgroupsembedded} also apply to arithmetic subgroups of 
$\Or^{+}(f,\mathbb{R})$.  This can be deduced from \cite[Lemmas 6 and 10]{ERT}.

We record the following immediate corollary of Proposition~\ref{orthogonalgroupsembedded} and the remark above. 

\begin{cor}
\label{simplestembed}
Let $\Gamma$ be an arithmetic subgroup of $\Or^{+}(n,1)$ of simplest type arising from an admissible quadratic form $\mathrm{f}$ of signature $(n,1)$ defined over a totally real field $k$. Suppose that there is an admissible quadratic form $\mathrm{g}$ of signature $(n+1,1)$ defined over the same field $k$, with $\Or(f)< \Or(g)$. Then:
\begin{enumerate}
\item If $n$ is even, $\Gamma$ embeds in an arithmetic subgroup of $\Or^{+}(n+1,1)$ of simplest type.
\item If $n$ is odd, $\Gamma^{(2)}$ embeds in an arithmetic subgroup of $\Or^{+}(n+1,1)$ of simplest type.\end{enumerate}
\end{cor}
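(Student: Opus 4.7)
The plan is to unpack the definition of ``arithmetic of simplest type'' in $\Or^{+}(n,1)$ so as to reduce the statement directly to Proposition \ref{orthogonalgroupsembedded} (in the $\Or^+$ version asserted in the remark), and then to repackage the conclusion via a conjugation identifying $\Or(g,\matR)$ with $\Or(n+1,1)$. Since the corollary is flagged as ``immediate'', the proof is mostly bookkeeping.

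First I would fix $T \in \GL(n+1,\matR)$ with $T^{-1}\Or^{+}(f,\matR)T = \Or^{+}(n,1)$. By definition of simplest type, together with the enlargement trick used in the proof of Proposition \ref{subgroupsofkpoints} (intersecting the finitely many distinct $\Gamma$-conjugates of $\Or(f,R_k)$ and then joining in $\Gamma$), I can replace the defining arithmetic subgroup of $\Or(f)$ by a commensurable one so as to arrange $T\Gamma T^{-1} \subset \tilde{\Gamma}$ for an arithmetic subgroup $\tilde{\Gamma} < \Or^{+}(f,\matR)$, rather than merely having commensurability. Because $\Or(f) < \Or(g)$ are both $k$-defined with $\mathrm{g}$ of signature $(n+1,1)$, the orthogonal complement of $V_f$ in $V_g$ is one-dimensional and positive-definite, so I can extend $T$ to a block-diagonal matrix $S = \mathrm{diag}(T, c) \in \GL(n+2,\matR)$ with $S^{-1}\Or(g,\matR)S = \Or(n+1,1)$ and $S^{-1}\Or(f,\matR)S$ equal to the standard copy of $\Or(n,1)$ in the upper-left block. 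Conjugation by $S$ then agrees with conjugation by $T$ on elements of $\Or(f,\matR) \subset \Or(g,\matR)$.

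Now I invoke the $\Or^+$ version of Proposition \ref{orthogonalgroupsembedded} to obtain an arithmetic subgroup $\tilde{\Lambda} < \Or^{+}(g,\matR)$ containing $\tilde{\Gamma}$ when $n$ is even, or containing $\tilde{\Gamma}^{(2)}$ when $n$ is odd. Setting $\Lambda = S^{-1}\tilde{\Lambda} S$ produces an arithmetic subgroup of $\Or^{+}(n+1,1)$ of simplest type arising from $\mathrm{g}$, and the containment $\tilde{\Gamma} \supset T\Gamma T^{-1}$ (respectively $\tilde{\Gamma}^{(2)} \supset T\Gamma^{(2)} T^{-1}$) transports along $S$ to $\Gamma \subset \Lambda$ (respectively $\Gamma^{(2)} \subset \Lambda$), finishing both cases. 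The only non-routine point is the enlargement step that converts the commensurability in the definition of simplest type into literal containment; the block-diagonal choice of $S$ and the stability of $\Gamma^{(2)}$ under conjugation by $T$ are both straightforward.
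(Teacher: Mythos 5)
Your argument is correct and follows the paper's own route: the corollary is recorded there as an immediate consequence of Proposition \ref{orthogonalgroupsembedded} (in the $\Or^{+}$ version noted in the remark following it) combined with exactly the conjugation bookkeeping you describe, transporting the containment from $\Or^{+}(g,\matR)$ back to $\Or^{+}(n+1,1)$. The only difference is that your ``enlargement step'' is superfluous: since $T\Gamma T^{-1}$ is a subgroup of $\Or^{+}(f,\matR)$ commensurable with $\Or(f,R_k)$, it is already itself an arithmetic subgroup to which the proposition applies directly.
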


The proof of Theorem \ref{main} will follow from Corollary \ref{simplestembed} once we have arranged the set-up of algebraic groups required by its statement, together with certain
separability results. The former is done in \S \ref{section:embedding}, and the latter in \S \ref{section:separability} and \S \ref{section:main-proof}.

Before that, we finish this section with some comments about adjoint groups in this context.

\subsection{Comments on adjoint groups}\label{section:comments-adjoint}

We begin by discussing one of the issues on embedding arithmetic groups when $n$ is odd. 

Thus, as before, let $\mathrm{f}$ be an admissible quadratic form of signature $(n,1)$ defined over a totally real field $k$ and $\mathrm{g}$ an admissible quadratic form of signature $(n+1,1)$ defined over $k$, as well. As above, let $\mathcal{G} = \SO(f) < \SO(g) = \mathcal{H}$. If $n$ is even, then the inclusion map 
$\iota: \mathcal{G} \rightarrow \mathcal{H}$ descends to the adjoint groups $\overline{\mathcal{G}} \rightarrow \overline{\mathcal{H}}$. The reason for this is that $\overline{\mathcal{G}}=\mathcal{G}$ and
so although the centre of $\mathcal{H}$ is non-trivial it is disjoint from $\mathcal{G}$. However, when $n$ is odd the inclusion map does not extend, precisely because in this case $\overline{\mathcal{H}}=\mathcal{H}$
and the central quotient is actually an isomorphism. In particular the centre of $\mathcal{G}$ is preserved; i.e. the restriction of the adjoint map $\mathcal{H}\rightarrow \overline{\mathcal{H}}$ to 
$\mathcal{G}$ is not the adjoint map.

Regardless of $n$, the subgroup $\SO^+(f,\matR) < \SO(f,\matR)$ described in \S \ref{section:arithm-gps} is centreless, and so this does inject under the adjoint map. However, it is still not clear that one can arrange the image of an arithmetic group to lie in the $k$-points.\\[\baselineskip]
We now provide a number of examples to highlight some of the issues. The first example serves as a counterpoint to the discussion about $\SL(2,\matZ)$ given in~\S \ref{section:prelim}.\\[\baselineskip]
\noindent{\bf Example.}~Let $\mathrm{f}(x,y,z)=xz-y^2$. Clearly $\mathrm{f}$ is an isotropic quadratic form of signature $(2,1)$. Indeed, in the notation above, $\SO^+(f,\matZ)$ is isomorphic to $\PSL(2,\matZ)$ and it can be seen from \cite[Section 2]{Mac} that every maximal arithmetic subgroup commensurable with 
$\PSL(2,\matZ)$ is mapped isomorphically (by a homomorphism that is defined over $\matQ$) to a subgroup of 
$\SO^+(f,\matQ)$. That is to say, these maximal groups can be realised as subgroups of the $\matQ$-points
unlike the situation of the embedding of 
$\SL(2,\matZ) \hookrightarrow \SL(3,\matZ)$ described in \S \ref{section:prelim}.\\[\baselineskip]
We now consider a similar example when $n=3$.\\[\baselineskip]
\noindent{\bf Example.}~Let $\pi = \langle 2+i \rangle \subset O_1={\matZ}[i]$ be one of the
prime ideals in $O_1$ of norm $5$. Following the set-up in \S \ref{section:prelim}, let
$\Gamma_0(\pi)$ be the subgroup of $\SL(2,O_1)$ consisting of matrices
which are congruent to an upper triangular matrix modulo $\pi$.  Then
the element
$$\tau = \begin{pmatrix}0 & 1/\sqrt{2+i}\\ & \\ -\sqrt{2+i} & 0 \end{pmatrix}$$
normalizes $\Gamma_0(\pi)$ and so the group $\Gamma = \langle \Gamma_0(\pi),\tau \rangle$
is an arithmetic subgroup of $\SL(2,\matC)$ commensurable with $\SL(2,O_1)$ and not contained in $\SL(2,{\matQ}(i))$. For convenience, we will continue to use the same notation on passage to $\PSL(2,O_1)$.

Let $\mathrm{f}$ denote the quadratic form $x_0x_1+x_2^2+x_3^2$. The homomorphism
$\phi:\PSL(2,\matC) \rightarrow \SO^+(f,\matR)$ given below:
\begin{align*}
&\begin{pmatrix} 
a_0+a_1i&b_0+b_1i\\ 
c_0+c_1i&d_0+d_1i 
\end{pmatrix} \longmapsto\\
&\resizebox{\textwidth}{!}{$\begin{pmatrix} d_0^2+d_1^2 & -b_0^2-b_1^2 & b_0d_0+b_1d_1 & b_1d_0-b_0d_1 \\
-c_0^2-c_1^2 & a_0^2+a_1^2 & -a_0c_0-a_1c_1 & -a_1c_0+a_0c_1 \\
2(c_0d_0+c_1d_1) & -2(a_0b_0+a_1b_1) & 2(b_0c_0+a_1d_1)+1 & 2(b_1c_0-a_0d_1) \\
2(c_0d_1-c_1d_0) & 2(a_1b_0-a_0b_1) & 2(b_1c_0-a_1d_0) & 2(a_0d_0-b_0c_0)-1 
\end{pmatrix}$}
\end{align*}
describes an isomorphism in which $\PSL(2,O_1)$ maps into
$\SO^+(f,\matZ)$.  However notice that $\tau$ has image with
$(1,2)$-entry equal to $- \left|\frac{1}{\sqrt{2+i}}\right|^2 = \frac{-1}{\sqrt{5}}$, and so the image of $\Gamma$ is not contained in $\SO^+(f,\matQ)$ either.  

Note that, as is easily checked, $\Gamma^{(2)} < \SO^+(f,\matQ)$.

\begin{rem}\hspace{-1ex} The isomorphism described above was worked out by Michelle Chu whilst correcting a claimed isomorphism in \cite[p. 463]{EGM} which turned out to be incorrect.\end{rem}

\section{Embedding orthogonal groups}\label{section:embedding}

In this section we describe how to arrange the embedding of orthogonal groups required in \S \ref{section:locating-arithm-gps}.

\begin{prop}
\label{extend_forms}
Let $\mathrm{f}$ be an admissible quadratic form of signature $(n,1)$ defined over a totally real field $k$.  Then there is an admissible quadratic form $\mathrm{g}$ of signature $(n+1,1)$ over $k$, so that $\Or(f)<\Or(g)$ (and $\SO(f)<\SO(g)$).
\end{prop}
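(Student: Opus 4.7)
My approach is to simply enlarge $\mathrm{f}$ by one extra positive-definite variable. Concretely, I would set
\[
\mathrm{g}(x_0,\ldots,x_n,x_{n+1}) \;=\; \mathrm{f}(x_0,\ldots,x_n) \;+\; x_{n+1}^2,
\]
so that the matrix of $\mathrm{g}$ is the block-diagonal matrix $\mathrm{diag}(F,1)$, where $F$ is the matrix of $\mathrm{f}$. Since the added entry lies in $k$ (in fact in $\matZ$), the form $\mathrm{g}$ is defined over $k$.

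The first thing I would verify is admissibility. At the identity embedding, $\mathrm{f}$ has signature $(n,1)$, so adjoining a positive square $x_{n+1}^2$ gives $\mathrm{g}$ signature $(n+1,1)$, as required. At every non-identity Galois embedding $\sigma\colon k\to\matR$, the form $\mathrm{f}^\sigma$ is positive definite of signature $(n+1,0)$; since $1^\sigma = 1 > 0$, the form $\mathrm{g}^\sigma$ is positive definite of signature $(n+2,0)$. Hence $\mathrm{g}$ is admissible of signature $(n+1,1)$ in the sense of \S\ref{section:quad-forms-arithm-lat}.

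Next I would produce the embedding of algebraic groups. The $k$-morphism
\[
\iota\colon \Or(f)\hookrightarrow \Or(g),\qquad X\longmapsto \begin{pmatrix} X & 0 \\ 0 & 1\end{pmatrix}
\]
is well-defined: for $X^tFX=F$ one has
\[
\begin{pmatrix} X & 0\\ 0 & 1\end{pmatrix}^{\!t}\!\begin{pmatrix} F & 0\\ 0 & 1\end{pmatrix}\!\begin{pmatrix} X & 0\\ 0 & 1\end{pmatrix} \;=\; \begin{pmatrix} X^tFX & 0 \\ 0 & 1\end{pmatrix} \;=\; \begin{pmatrix} F & 0 \\ 0 & 1\end{pmatrix},
\]
so $\iota(X)\in\Or(g)$. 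The map $\iota$ is clearly an injective homomorphism of algebraic groups defined over $k$, and since $\det\iota(X)=\det X$, it restricts to an injection $\SO(f)\hookrightarrow\SO(g)$.

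There is essentially no obstacle here: the argument is a one-line construction, and the only thing to keep an eye on is the signature bookkeeping at the non-identity embeddings, which is why it is critical that $k$ is totally real so that the coefficient $1$ remains positive under every $\sigma$. (If one preferred, any totally positive $a\in k$ would serve in place of $1$, but no such flexibility is needed for the statement.)
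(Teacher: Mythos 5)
Your construction $\mathrm{g}=\mathrm{f}+x_{n+1}^2$ is correct and does prove the Proposition exactly as stated: the block-sum matrix identity gives $\Or(f)<\Or(g)$, and the signature bookkeeping at the non-identity real places is right, so $\mathrm{g}$ is admissible of signature $(n+1,1)$. This is in fact the paper's choice of $\mathrm{g}$ in the generic case, and your version is cleaner in that it needs no diagonalization (the paper first reduces to a diagonal form, which your direct block-sum argument shows is unnecessary for the bare statement). Where the paper's proof genuinely does more is in controlling whether $\mathrm{g}$ is isotropic or anisotropic, i.e.\ whether the ambient lattice $\Or(\mathrm{g},R_k)$ is cocompact: for $k\neq\mathbb{Q}$ this is automatic, and for $k=\mathbb{Q}$, $n\geq 3$, Meyer's theorem forces $\mathrm{g}$ to be isotropic no matter what, but for $k=\mathbb{Q}$, $n=2$ and $\mathrm{f}$ anisotropic your $\mathrm{g}=y^2+\mathrm{f}$ may become isotropic (it does whenever $\mathrm{f}$ represents $-1$), producing a non-compact ambient $3$-orbifold from a closed surface. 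The paper avoids this by taking $\mathrm{g}=q\,y^2+\mathrm{f}$ with $-q$ not represented by $\mathrm{f}$ (Lemma~\ref{lemma:missing-number} of the Appendix), which keeps $\mathrm{g}$ anisotropic; this refinement is what gets used later (e.g.\ the final sentence of Theorem~\ref{main} and the closed low-dimensional examples). So your argument suffices for Proposition~\ref{extend_forms} itself, but be aware that the downstream applications lean on the stronger conclusion baked into the paper's case analysis rather than on the statement alone.
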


\begin{proof} To prove Proposition \ref{extend_forms}, we first reduce it to the case when $\mathrm{f}$ is a 
diagonal quadratic form, namely:\\[\baselineskip]
\noindent{\bf Claim:}~{\em Suppose that $\mathrm{f}$ is represented by the admissible diagonal quadratic form $a_0x_0^2+a_1x_1^2+\cdots +a_{n-1}x_{n-1}^2-bx_n^2,$ where $a_i\in R_k$ are all positive and square free for $i=0,\ldots, n-1$, and $b\in R_k$ is positive and square free. Then there is an admissible 
diagonal quadratic form $\mathrm{g}$ of signature $(n+1,1)$ with $\Or(f)<\Or(g)$.}\\[\baselineskip]
We defer the proof of the Claim and deduce Proposition \ref{extend_forms}. To that end, let $\mathrm{f}$ be an admissible quadratic form, so that standard properties of quadratic forms (see \cite{La}) provide an equivalence over $k$ of $\mathrm{f}$ to an admissible diagonal quadratic form $\mathrm{f}_0$. In particular,
there exists $T\in\GL(n+1,k)$ so that $T^{-1}\mathrm{O}(\mathrm{f}_0)T=\Or(f)$. Assuming the Claim, and applying it to 
$\mathrm{f}_0$, we have an admissible diagonal quadratic form $\mathrm{g}_0$ so that $\Or(f_0)<\Or(g_0)$. We can extend $T$ in a natural way to define a matrix 
$$\widehat{T} = \left( \begin{array}{ccc}   1 &| & 0 \\      
             
 0     & |  &   T \end{array} \right) \in  \GL(n+2,k),$$

\noindent which provides an equivalence of the diagonal form $\mathrm{g}_0$ to an admissible 
quadratic form $\mathrm{g}$ with $\Or(f)<\Or(g)$.\\[\baselineskip]
We now prove the Claim. Let us start with some comments about the form $\mathrm{f}$. If $\mathrm{f}$ is anisotropic over $k$ then we can assume that $b\neq a_i$ for $i=0,\ldots , n-1$.  Since $\Or(\lambda f)=\Or(f)$ for all $\lambda \in k^*$, we can multiply $\mathrm{f}$ by ${a_0}^{-1}$ and therefore assume that 
$a_0=1$, and also that all other coefficients are square-free. 

Suppose first that $k\neq \matQ$. In this case we can take $\mathrm{g}=y^2+\mathrm{f}$, which will be again a quadratic form over $k$. Then $\Or(g,R_k)$ 
is cocompact, as follows from \cite[Proposition 6.4.4]{Morris}. 

Now suppose $k=\matQ$. By Meyer's Theorem (see \cite[\S 3.2, Corollary 2]{Se}), if $n\geq 4$ (i.e. $\mathrm{f}$ is an indefinite quadratic form over $\matQ$ in at least $5$ variables) then $\mathrm{f}$ is isotropic and so taking $\mathrm{g} = y^2 + \mathrm{f}$ is an admissible isotropic quadratic form.  When $n=3$, since by Meyer's theorem any admissible quadratic form of signature $(4,1)$ will be isotropic, we can simply take $\mathrm{g} = y^2 + \mathrm{f}$ once again. In either case, $\Or(g,\matZ)$ has finite co-volume \cite[Theorem 7.8]{BHCh} but is not cocompact \cite[Proposition 5.3.4]{Morris}.

When $n=2$, then we set $\mathrm{g} = q\,y^2 + \mathrm{f}$, where $\mathrm{f}$ is a ternary quadratic form of signature $(2,1)$ and $q$ is a positive rational number. If $\mathrm{f}$ is isotropic, then so is $\mathrm{g}$, and 
we can simply put $q=1$. As above, in this case, the group $\Or(g,\matZ)$ 
has finite co-volume but is not cocompact. If $\mathrm{f}$ is anisotropic, then there exists a positive rational number $q$ such that $-q$ is not represented by $\mathrm{f}$ over $\mathbb{Q}$ (see Lemma~\ref{lemma:missing-number} 
of the Appendix). The form $\mathrm{g} = q\,y^2 + \mathrm{f}$ is the required anisotropic quadratic form. The group $\Or(g,\matZ)$ is cocompact by \cite[Proposition 5.3.4]{Morris}. \end{proof}  

We can improve the embedding described in Proposition \ref{extend_forms} in the following sense.

\begin{cor}
\label{improved_embedding}
In the notation established above, if $R\subset \matR$ is a subring, then
$\Or^{+}(\mathrm{f},R)$ can be embedded in $\SO^+(\mathrm{g},R)$.\end{cor}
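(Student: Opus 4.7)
The plan is to exhibit an explicit injective homomorphism $\phi:\Or^+(\mathrm{f},R)\to \SO^+(\mathrm{g},R)$, based on the particular shape of $\mathrm{g}$ produced in the proof of Proposition \ref{extend_forms}. Recall that there, $\mathrm{g}$ is constructed as an orthogonal sum $\mathrm{g}(y,x_0,\dots,x_n) = c\, y^2 + \mathrm{f}(x_0,\dots,x_n)$ with $c\in k$ positive (equal to $1$ in most cases, and equal to an appropriate positive rational in the remaining case). The standard inclusion $A\mapsto \mathrm{diag}(1,A)$ sends $\Or(\mathrm{f})$ into $\Or(\mathrm{g})$ and $\SO(\mathrm{f})$ into $\SO(\mathrm{g})$, but does not push the orientation-reversing elements of $\Or^+(\mathrm{f},R)$ into $\SO(\mathrm{g},R)$.

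The trick is to replace the $1$ in the upper-left corner by $\det A$:
\[
\phi(A) \;=\; \begin{pmatrix} \det A & 0 \\ 0 & A \end{pmatrix}.
\]
Writing the Gram matrix of $\mathrm{g}$ as $G = \mathrm{diag}(c,F)$ and using $(\det A)^2 = 1$ and $A^tFA = F$, a direct block computation gives $\phi(A)^tG\phi(A) = G$, while $\det\phi(A) = (\det A)^2 = 1$. Hence $\phi(A)\in \SO(\mathrm{g})$. The identity $\phi(AB)=\phi(A)\phi(B)$ is immediate, and $\phi$ is injective since $A$ is recoverable from the lower-right block.

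I would then verify the two remaining conditions. If $A\in \Or(\mathrm{f},R)$, then $\det A\in\{\pm 1\}\subset R$, so all entries of $\phi(A)$ lie in $R$; thus $\phi(A)\in \SO(\mathrm{g},R)$. For the cone condition, observe that the unique negative direction of $\mathrm{g}$ is the ``time-like'' direction of $\mathrm{f}$: indeed $\mathrm{g}(y,x)<0$ forces $\mathrm{f}(x) < -cy^2 \le 0$, so $x$ lies in the negative cone of $\mathrm{f}$, and the two components of $\{\mathrm{g}<0\}\subset \matR^{n+2}$ are in bijection with the two components of $\{\mathrm{f}<0\}\subset \matR^{n+1}$ via the projection $(y,x)\mapsto x$. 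Since $\phi(A)$ acts as $(y,x)\mapsto (\pm y,Ax)$ and $A\in\Or^+(\mathrm{f},R)$ preserves the components of $\{\mathrm{f}<0\}$, the map $\phi(A)$ preserves the components of $\{\mathrm{g}<0\}$. Combined with $\det\phi(A)=+1$, this gives $\phi(A)\in \SO^+(\mathrm{g},R)$, as desired.

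I do not expect serious obstacles: the only non-formal step is matching up the connected components of the light cones, and this reduces cleanly to the observation that the negative direction of $\mathrm{g}$ is inherited from $\mathrm{f}$ because the additional variable $y$ appears with a positive coefficient $c$. The same formula $\phi$ works uniformly in the subring $R$ and in all cases covered by Proposition \ref{extend_forms}.
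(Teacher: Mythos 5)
Your proposal is correct and is essentially identical to the paper's own proof: the paper uses exactly the map $M\mapsto\mathrm{diag}(\det M,\,M)$ and verifies the same three points (preservation of $\mathrm{g}$, determinant $1$, and preservation of the appropriate component of the negative cone). Your slightly more detailed matching of the components of $\{\mathrm{g}<0\}$ with those of $\{\mathrm{f}<0\}$ via the projection is a fine way to phrase the last step.
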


\begin{proof} As above we write $\mathrm{g}=ay^2+\mathrm{f}$ for some $a\in R_k$. Define
a homomorphism $\psi : \Or^{+}(\mathrm{f},R) \rightarrow \SO(\mathrm{g},R)$ by 
$$\widehat{M}= \psi(M) = \left( \begin{array}{ccc}   \det(M) &| & 0 \\      
             
 0     & |  &   M \end{array} \right).$$

We claim that $\widehat{M}\in\SO^+(\mathrm{g},R)$. It is clear that $\mathrm{g}(\widehat{M}(v))=\mathrm{g}(v)$ for every $v=(y,x_0,\dots,x_n)\in \mathbb{R}^{n+2}$, since $M\in \Or(\mathrm{f})$, and also $\mathrm{det}(\widehat{M})=1$. Moreover, since the transformation $M\in \Or^{+}(\mathrm{f},R)$ preserves the upper half-sheet of the $n$-dimensional hyperboloid $\{\mathrm{f}(x_0,\dots,x_n)$ $=-1\}\cap\{ x_n>0\}$, its image $\widehat{M}$ preserves the upper half-sheet of the $(n+1)$-dimensional hyperboloid $\{\mathrm{g}(y,x_0,\dots,x_n)=-1\}\cap \{x_n>0\}$. \end{proof}

\noindent{\bf Example:}~Let $\mathrm{g}^\prime = x^2 + y^2 + z^2 - 7
w^2$. This is an anisotropic form over $\mathbb{Q}$ due to the fact
that $7$ is not a sum of three integer squares (see for example
\cite[Appendix, p.~45]{Se}), and a number is a sum of three integers
if and only if it's a sum of three rational squares by the
Davenport-Cassels lemma (see \cite[Lemma B, p.~46]{Se}).  Thus,
$\SO^+(\mathrm{g}^\prime, \mathbb{Z})$ defines a cocompact arithmetic
Kleinian group $H$ of simplest type.

Now we can embed a totally geodesic sub-2-orbifold into the quotient space $\mathbb{H}^3 / H$ as follows. First, consider the ternary form $\mathrm{f} = x^2 + z^2 - 3w^2$. This form is also anisotropic over $\mathbb{Q}$ by reasons analogous to the above. Thus, it defines a cocompact arithmetic Fuchsian group $G$ of simplest type. Set $\mathrm{g} = 21 y^2 + \mathrm{f}$. 
We claim that the form $\mathrm{g}$ is equivalent to $\mathrm{g}^\prime$ over $\mathbb{Q}$. 

To see this, since the signatures of $\mathrm{g}$ and
$\mathrm{g}^\prime$ coincide, it suffices to check their discriminants and Hasse invariants  \cite[Theorem~9,
p.~44]{Se}. We have that $d(\mathrm{g}) = -7\cdot 3^2$ and
$d(\mathrm{g}^\prime) = -7$. Also, we compute $\epsilon(\mathrm{g}) =
(21, -3) = (-(-3)\cdot 7,-3) = (7, -3) = 1$, by the standard Hilbert
symbol's properties \cite[Proposition 2, p.~19]{Se}. Analogously,
$\epsilon(\mathrm{g}^{\prime}) = (1, -7) = 1$, by a straightforward
check, over any $\mathbb{Q}_p$. 

Thus, we have an example of an arithmetic Kleinian group containing an arithmetic Fuchsian subgroup, whose embedding can be observed on the level of the corresponding quadratic forms as described at the end of the
of the proof of Proposition \ref{extend_forms}. 

\section{Separability and consequences}\label{section:separability}

To pass from embedding of subgroups to actual embeddings of manifolds
we need to use recent progress on certain separability properties of these
arithmetic groups of simplest type. 

\subsection{Separability}

\begin{defn}
  Let $\Gamma$ be a finitely generated, discrete subgroup of
  $\Or^+(n,1)$. We say that $\Gamma$ is
\emph{geometrically finite extended residually finite} (or GFERF for
  short) if every geometrically finite subgroup $H$ of $\Gamma$ is
  separable in $\Gamma$ (i.e. $H$ is closed in the profinite topology
  on $\Gamma$).\end{defn}

We record the following from \cite{BHW}.

\begin{teo}\label{teo:standard-gferf}
Let $\Gamma<\Or^+(n,1)$ be an arithmetic group of simplest type. 
Then $\Gamma$ is GFERF.
\end{teo}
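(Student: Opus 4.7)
The plan is to reduce the GFERF property for $\Gamma$ to a virtual specialness statement in the sense of Haglund--Wise, and then to invoke the separability theorems available in that setting. As a first step I would pass to a torsion-free finite-index subgroup of $\Gamma$, since GFERF is inherited in both directions between commensurable finitely generated subgroups of $\Or^+(n,1)$. With this reduction, $M=\matH^n/\Gamma$ is a genuine arithmetic hyperbolic manifold of simplest type.

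The structural input I would exploit, following Bergeron--Haglund--Wise, is that an arithmetic lattice of simplest type contains an abundance of codimension-one subgroups coming from totally geodesic hyperplanes. Concretely, for the admissible quadratic form $\mathrm{f}$ defining $\Gamma$, every vector $v\in k^{n+1}$ with $\mathrm{f}(v)>0$ is orthogonal to a totally geodesic hyperplane in $\matH^n$, and its $k$-rationality guarantees that the stabilizer of this hyperplane in $\Gamma$ is itself an arithmetic lattice in a copy of $\Or(n-1,1)$ of simplest type. The next step, which I expect to be the main technical obstacle, would be to exhibit (after passing to a deeper congruence subgroup if necessary) a $\Gamma$-invariant locally finite collection of such hyperplanes whose dual CAT(0) cube complex carries a proper cocompact $\Gamma$-action in the uniform case, or a proper cofinite action in the non-uniform case. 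One selects the hyperplane collection so that its crossing pattern has enough separation to guarantee properness, while containing enough hyperplanes to separate any given quasi-convex subset of $\matH^n$ from points outside it. Arranging both requirements simultaneously, and verifying that the resulting wall-space satisfies the Haglund--Wise specialness axioms, is the nontrivial point.

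Once the cube-complex action is in place, I would apply Agol's theorem in the cocompact word-hyperbolic case, and Wise's theorem in the cusped relatively hyperbolic case, to conclude that a finite-index subgroup of $\Gamma$ acts specially on this cube complex; equivalently, $\Gamma$ is virtually special. The Haglund--Wise theory of canonical completions and retractions then implies that every quasi-convex subgroup of a virtually special word-hyperbolic group is separable, and that every relatively quasi-convex subgroup of a relatively hyperbolic virtually special group is separable relative to the peripheral structure.

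Finally, I would match the geometric notion of geometric finiteness with the combinatorial notion of (relative) quasi-convexity: by results of Bowditch and Hruska, a subgroup of a lattice in $\Or^+(n,1)$ is geometrically finite if and only if it is relatively quasi-convex with respect to the cusp subgroups. Combining this identification with the separability of (relatively) quasi-convex subgroups in virtually special groups yields GFERF for $\Gamma$, as desired.
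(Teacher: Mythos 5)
The paper's entire ``proof'' of this theorem is a citation: it states that the result follows from \cite[Corollary 1.12]{BHW}, with the final remark of that paper covering the non-compact case. Your proposal is therefore not so much an alternative proof as a reconstruction of the strategy behind the cited result. The outline is sound: the reduction to a torsion-free finite-index subgroup (GFERF passes between commensurable groups), the use of $k$-rational vectors of positive norm to produce totally geodesic hyperplanes whose stabilizers are again arithmetic of simplest type, the passage to a wall-space/cube complex, and the identification of geometric finiteness with relative quasi-convexity are all the correct ingredients, and you rightly flag that the crux is producing a $\Gamma$-invariant hyperplane family with enough separation and a proper action on the dual complex --- that is precisely the technical content of Bergeron--Haglund--Wise. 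One difference worth noting: \cite{BHW} does not route through Agol's or Wise's virtual specialness theorems. It shows directly that the lattice virtually embeds as a quasi-convex subgroup of the right-angled Coxeter group generated by reflections in the chosen hyperplanes, and then invokes Haglund's virtual retraction onto quasi-convex subgroups of Coxeter groups to obtain separability; your detour through Agol's theorem in the cocompact case is thus unnecessary (though not incorrect), and in the cusped case the needed statement is supplied by the final remark of \cite{BHW} rather than by the relatively hyperbolic specialness machinery you invoke. As a self-contained argument your plan leaves the hardest step (properness, cocompactness, and the specialness of the wall-space) unexecuted; as a proof plan it matches the architecture of the published result that the paper simply quotes.
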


The proof of Theorem \ref{teo:standard-gferf} follows from
\cite[Corollary $1.12$]{BHW} (see also the final remark of the paper,
which deals with the non-compact case). 

An important consequence of GFERF that we will make use of is the 
following. Throughout we will denote the profinite completion of a group $G$ 
by $\widehat{G}$, and the closure of a subset $X\subset\widehat{G}$ by $Cl(X)$.

\begin{lemma}\label{lemma:closure-isomorphic}
Let $\Gamma<\Or^+(n,1)$ be an arithmetic group of simplest type, and 
let $H<\Gamma$ be a geometrically 
finite subgroup. Then $Cl(H)$ in $\widehat{\Gamma}$ is
isomorphic to $\widehat{H}$.\end{lemma}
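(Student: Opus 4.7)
The plan is to show that the profinite topology on $\Gamma$ induces on $H$ its own full profinite topology; once this is established, the identification $Cl(H)\cong \widehat{H}$ follows from the standard universal property of profinite completions (any compatible system of finite quotients of $H$ extends to $Cl(H)$, and vice versa, because $Cl(H)$ is by construction the inverse limit of the images of $H$ in the finite quotients of $\Gamma$).

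More concretely, I would verify the well-known criterion: $Cl(H)\cong \widehat{H}$ holds if and only if for every finite-index subgroup $K\le H$ there is a finite-index subgroup $L\le \Gamma$ with $L\cap H\subseteq K$. So let $K\le H$ be of finite index, and write $H=K\sqcup h_1K\sqcup\cdots\sqcup h_{m-1}K$. A finite-index subgroup of a geometrically finite Kleinian group is again geometrically finite (it has the same limit set and the same convex core, up to finite cover), so $K$ itself is geometrically finite in $\Gamma$. By Theorem \ref{teo:standard-gferf}, $K$ is separable in $\Gamma$, which means that for each coset representative $h_i\notin K$ I can find a finite-index subgroup $L_i\le \Gamma$ with $K\subseteq L_i$ and $h_i\notin L_i$.

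Now set $L=L_1\cap\cdots\cap L_{m-1}$, still of finite index in $\Gamma$, and still containing $K$. If $x\in L\cap H$, write $x=h_ik$ for some $k\in K$ and some $i\in\{0,1,\dots,m-1\}$ (with $h_0=1$). Then $h_i=xk^{-1}\in L$, since both $x\in L$ and $k\in K\subseteq L$; but $h_i\notin L$ for $i\ge 1$, so $i=0$ and $x\in K$. Thus $L\cap H=K$, which is the desired criterion.

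Having established this, the map $\widehat{H}\to\widehat{\Gamma}$ induced by inclusion is a topological embedding whose image is precisely $Cl(H)$, giving the claimed isomorphism. The only non-routine ingredient is the separability of $K$ in $\Gamma$, which is exactly what GFERF delivers; everything else is elementary coset bookkeeping and the standard translation between the profinite topology on a group and its profinite completion. The main obstacle, conceptually, is making sure the GFERF hypothesis applies to $K$ rather than only to $H$, but this is immediate from the fact that finite-index subgroups of geometrically finite subgroups are geometrically finite.
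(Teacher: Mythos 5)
Your proposal is correct and follows essentially the same route as the paper: reduce the isomorphism $Cl(H)\cong\widehat{H}$ to the criterion that every finite-index subgroup of $H$ is the intersection with $H$ of a finite-index subgroup of $\Gamma$, and obtain that criterion from the separability (via Theorem \ref{teo:standard-gferf}) of finite-index subgroups of $H$, which are themselves geometrically finite. The only difference is that you write out the coset/separability bookkeeping explicitly, whereas the paper delegates it to a cited lemma of \cite{Reid}.
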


\begin{proof} By the
universal property of profinite completions, there is a natural
continuous surjective homomorphism $\phi:\widehat{H}\rightarrow Cl(H)$
which extends the inclusion $H<\Gamma$ (notice that
$Cl(H)$ is a closed subgroup of a profinite group and therefore is
also profinite). 

We need to show that the map $\phi$ is injective. 
Note that $\phi$ is injective if and only if, given any 
finite index subgroup $H_1<H$, there is a finite index subgroup 
$\Gamma_1<\Gamma$ such that $\Gamma_1\cap H < H_1$. 

By standard properties, all of the finite index subgroups of $H$
are also geometrically finite and therefore separable in $\Gamma$. This implies that indeed, for any finite index subgroup $H_1$ in $H$, there exists a finite index subgroup $\Gamma_1<\Gamma$ such that $\Gamma_1\cap H=H_1$. For details, see \cite[Lemma~4.6]{Reid} and the preceding discussion. \end{proof}


\subsection{Cohomological goodness}

We now introduce a particular class of groups which have the nice property that their profinite completion is torsion-free.

Let $G$ be a profinite group, and $M$ a discrete $G$-module (i.e.\ an
Abelian group equipped with the discrete topology on which $G$ acts
continuously). Let $C^n(G,M)$ be the set of all continuous maps from
$G^n$ to $M$. Define the coboundary operator $\partial:
C^n(G,M)\rightarrow C^{n+1}(G,M)$ in the usual way to produce a
cochain complex $C^*(G,M)$. The cohomology groups of this complex are
denoted by $H^q(G,M)$ and are called the continuous cohomology groups
of $G$ with coefficients in $M$.

\begin{defn}
Let $\Gamma$ be a finitely generated group. We say that $\Gamma$ is \emph{good} if, for all $q\geq 0$ and for every finite $\Gamma$-module $M$ the natural homomorphism of cohomology groups \begin{equation}H^q(\widehat{\Gamma},M)\rightarrow H^q(\Gamma,M)\end{equation} induced by the inclusion $\Gamma\rightarrow \widehat{\Gamma}$
is an isomorphism between the cohomology of $\Gamma$ and the continuous cohomology of $\widehat{\Gamma}$.
\end{defn}

In general, establishing whether a group is good or not, is
difficult. However, in our context we have the following (which is
proved below).

\begin{teo}
\label{Simplest_type_good}
Let $\Gamma < \Or^+(n,1)$ be an arithmetic group of simplest type. Then $\Gamma$
is good.\end{teo}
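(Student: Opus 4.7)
The plan is to deduce Theorem \ref{Simplest_type_good} from the virtual specialness of $\Gamma$, combined with goodness of right-angled Artin groups and the permanence of goodness under commensurability. So the proof will have three steps: (1) reduce $\Gamma$ to a virtual retract of a RAAG, (2) prove RAAGs are good, (3) promote goodness across a finite index inclusion.

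\textbf{Step 1 (Virtual specialness).} I would invoke the same input already used in Theorem~\ref{teo:standard-gferf}: by \cite{BHW}, the arithmetic lattice $\Gamma$ admits a finite index subgroup $\Gamma_0$ that is virtually compact special in the cocompact case, and virtually sparse special (with virtually abelian peripheral subgroups) in the finite covolume cusped case. In either case, the Haglund--Wise canonical completion and retraction (in its compact, resp.\ relative, form) embeds $\Gamma_0$ as a \emph{virtual retract} of a finitely generated right-angled Artin group $A_L$.

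\textbf{Step 2 (Goodness of RAAGs and retracts).} I would then cite (or re-derive) that every finitely generated RAAG $A_L$ is good. A convenient inductive proof uses the decomposition of $A_L$ as an amalgamated product along a star of a vertex (or an HNN extension), combined with the permanence theorem of Grunewald--Jaikin-Zalesskii: an amalgam of good groups over a separable good edge group is good. The base case of free groups is classical (Serre). Because goodness passes to retracts of good groups (retracts split the relevant cohomology maps), $\Gamma_0$ is good. In the cusped case, one additionally needs that the peripheral virtually abelian subgroups are good, but this is immediate since virtually polycyclic groups are good by Serre.

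\textbf{Step 3 (Commensurability).} Finally, goodness is preserved under finite extensions, so goodness of $\Gamma_0$ upgrades to goodness of $\Gamma$. Concretely, replacing $\Gamma_0$ by its normal core one may assume $\Gamma_0 \triangleleft \Gamma$ with $\Gamma/\Gamma_0$ finite. Applying the Lyndon--Hochschild--Serre spectral sequence to $1 \to \Gamma_0 \to \Gamma \to \Gamma/\Gamma_0 \to 1$ and its profinite analogue $1 \to \mathrm{Cl}(\Gamma_0) \to \widehat{\Gamma} \to \Gamma/\Gamma_0 \to 1$ (using Lemma~\ref{lemma:closure-isomorphic} to identify $\mathrm{Cl}(\Gamma_0) \cong \widehat{\Gamma_0}$), the five lemma and goodness of $\Gamma_0$ give the required isomorphism $H^q(\widehat{\Gamma},M) \to H^q(\Gamma,M)$ for every finite $\Gamma$-module $M$.

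The principal obstacle is Step 1 in the non-cocompact case, where the underlying special cube complex is not compact and the Haglund--Wise retraction machinery needs to be applied in its relatively hyperbolic variant (via Groves--Manning or Einstein--Groves). Alternatively, one can circumvent this by doubling $M$ along a horospherical truncation to land in the closed setting and then deducing goodness of $\Gamma$ from goodness of the double via a further finite-index argument; either route keeps the overall plan intact, with all remaining steps being formal consequences of the cited permanence results.
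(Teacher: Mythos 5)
Your proposal is correct and follows essentially the same route as the paper: virtual retract of a RAAG via the Bergeron--Haglund--Wise specialness results, goodness of RAAGs, and permanence of goodness under virtual retracts and commensurability. The only cosmetic difference is that the paper passes explicitly through a right-angled Coxeter group $C$ (with $\Gamma_1$ quasi-convex in $C$ and $C$ itself virtually special), which lets Haglund's retraction theorem handle the cocompact and cusped cases uniformly and so avoids the relatively hyperbolic complications you flag in your final paragraph.
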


Indeed this follows for all finite volume hyperbolic 3-manifolds
by the virtual fibering results of Agol \cite{Ag} and Wise \cite{Wi},
together with the
following criterion due to Serre \cite[Chapter 1, Section 2.6]{Serre}:

\begin{prop}\label{prop:criterion-good}
Let $\Gamma$ be a finitely generated group. Suppose that there exists a short exact sequence
\begin{equation}
1\rightarrow N\rightarrow \Gamma\rightarrow H \rightarrow 1
\end{equation} where $H$ and $N$ are good groups, $N$ is finitely generated and the cohomology group $H^q(N,M)$ is finite for every $q$ and every finite $\Gamma$-module $M$. Then $\Gamma$ is good.
\end{prop}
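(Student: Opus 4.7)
The plan is to compare the Lyndon--Hochschild--Serre (LHS) spectral sequences of the given discrete extension and its profinite completion, reducing the goodness of $\Gamma$ to the goodness hypotheses on $H$ and $N$.

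First I would establish that profinite completion turns the given short exact sequence into a short exact sequence of profinite groups
$$1\to\widehat{N}\to\widehat{\Gamma}\to\widehat{H}\to 1.$$
Functoriality gives the surjection on the right; the delicate point is to show that the closure of $N$ in $\widehat{\Gamma}$ is isomorphic to $\widehat{N}$, i.e., that the profinite topology of $N$ coincides with the one inherited from $\widehat{\Gamma}$. Given a finite-index subgroup $N_0\leq N$, the finite generation of $N$ implies that $N_0$ has only finitely many $\Gamma$-conjugates, whose intersection $N_1$ is $\Gamma$-normal in $\Gamma$ and of finite index in $N$. One then needs to find a finite-index subgroup of $\Gamma$ whose image in the finite extension $1\to N/N_1\to \Gamma/N_1\to H\to 1$ meets $N/N_1$ trivially. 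Filtering the finite group $N/N_1$ by its derived series reduces to the abelian case, where the classifying class in $H^2(H,N/N_1)$ vanishes on a finite-index subgroup of $H$ by goodness of $H$ applied to this finite coefficient module.

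Having established the profinite exact sequence, for a finite $\Gamma$-module $M$ I would compare
$$E_2^{p,q}=H^p(H,H^q(N,M))\;\Longrightarrow\;H^{p+q}(\Gamma,M),$$
$$\widehat{E}_2^{p,q}=H^p(\widehat{H},H^q(\widehat{N},M))\;\Longrightarrow\;H^{p+q}(\widehat{\Gamma},M),$$
linked by the morphism induced by $\Gamma\hookrightarrow\widehat{\Gamma}$. Goodness of $N$ applied to the finite coefficient module $M$ gives $H^q(\widehat{N},M)\cong H^q(N,M)$, and by hypothesis this common value is finite, hence a finite continuous $\widehat{H}$-module; goodness of $H$ then yields $H^p(\widehat{H},H^q(N,M))\cong H^p(H,H^q(N,M))$. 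So the comparison morphism is an isomorphism on $E_2$-pages, and the standard comparison theorem for first-quadrant convergent spectral sequences delivers an isomorphism on the abutments, which is precisely the goodness of $\Gamma$.

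The main obstacle is the very first step: showing that the closure of $N$ in $\widehat{\Gamma}$ is all of $\widehat{N}$, which is what lets the profinite LHS have $H^q(\widehat{N},M)$ (rather than cohomology of a proper profinite quotient) on its $E_2$-page. Once this technical identification is secured, the remainder is a formal manipulation in which goodness of $H$ and of $N$ handle the two directions of the double complex, while the finiteness hypothesis on $H^q(N,M)$ guarantees that these coefficient modules are well-behaved as finite continuous modules for $\widehat{H}$.
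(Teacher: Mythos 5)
The paper does not prove this proposition at all: it is quoted verbatim as a known criterion from Serre's \emph{Galois Cohomology} \cite[Chapter 1, Section 2.6]{Serre}, so there is no in-paper argument to compare against. Your proposal is the standard expected proof of that exercise of Serre (establish exactness of $1\to\widehat{N}\to\widehat{\Gamma}\to\widehat{H}\to 1$, then compare the discrete and profinite Lyndon--Hochschild--Serre spectral sequences, using goodness of $N$ in the fibre direction, goodness of $H$ in the base direction, and the finiteness of $H^q(N,M)$ to make the coefficients legitimate finite continuous $\widehat{H}$-modules), and the spectral-sequence half of your argument is correct and complete in outline.

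There is, however, a genuine flaw in the one step you rightly identify as delicate. After replacing $N_0$ by the $\Gamma$-normal finite-index subgroup $N_1$ (correct: a finitely generated group has only finitely many subgroups of a given index, so $N_0$ has finitely many $\Gamma$-conjugates), you propose to handle the finite kernel $F=N/N_1$ of $1\to F\to\Gamma/N_1\to H\to 1$ by ``filtering $F$ by its derived series'' to reduce to the abelian case. The derived series of a finite group terminates at its perfect core, not at the trivial group, so this reduction fails whenever $F$ is not solvable --- and non-solvable quotients $N/N_1$ cannot be avoided (e.g.\ $N$ free surjecting onto $A_5$), nor can one in general shrink $N_1$ further to force solvability. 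The standard repair is to pass instead to the centralizer $C=C_{\Gamma/N_1}(F)$, which has finite index because $(\Gamma/N_1)/C$ embeds in the finite group $\mathrm{Aut}(F)$, and which satisfies $C\cap F=Z(F)$. One is then reduced in a single step to a \emph{central} extension $1\to Z(F)\to C\to \bar{C}\to 1$ with $\bar{C}$ of finite index in $H$ (hence still good), where your $H^2$ argument applies: by goodness the class lies in the image of $H^2(\widehat{\bar{C}},Z(F))$, every continuous class is inflated from a finite quotient, so the class restricts to zero on a finite-index subgroup, the extension splits there, and the resulting complement is the desired finite-index subgroup of $\Gamma/N_1$ meeting $F$ trivially. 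With this substitution your proof goes through.
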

Moreover, goodness is preserved by commensurability and free products of good groups are good. Furthermore, controlled amalgams of good groups are good 
\cite{GJJZ}.
In particular (see e.g. \cite[Theorem 7.3]{Reid}), it can be shown that 
the following groups are good: 
\begin{enumerate}
\item finitely generated Fuchsian groups,
\item fundamental groups of compact $3$-manifolds,
\item fully residually free groups,
\item Right Angled Artin groups (RAAG's),
\end{enumerate}

We will make use of the previous remarks and 
the following lemma (\cite[Lemma 3.1]{MZ}) in the proof of Theorem
\ref{Simplest_type_good}.

\begin{lemma}
\label{retract_good}
Suppose that $G$ is a residually finite good group and $H$ is a virtual retract
of $G$. Then $H$ is good.\end{lemma}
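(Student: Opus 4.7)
Unpacking the hypothesis, there is a finite index subgroup $G_0<G$ containing $H$ together with a retraction $r\colon G_0\to H$ such that $r\circ i=\mathrm{id}_H$, where $i\colon H\hookrightarrow G_0$ is the inclusion. Since $G$ is residually finite, so are $G_0$ and $H$, so the natural maps $G_0\to\widehat{G_0}$ and $H\to\widehat{H}$ are injective and $r,i$ extend uniquely to continuous homomorphisms $\widehat{r}\colon\widehat{G_0}\to\widehat{H}$ and $\widehat{i}\colon\widehat{H}\to\widehat{G_0}$ with $\widehat{r}\circ\widehat{i}=\mathrm{id}_{\widehat{H}}$. My first step is to note that $G_0$ is itself good: indeed $G_0$ is commensurable with $G$, and goodness is preserved under commensurability (a remark recorded just before the lemma).

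Next, given a finite $H$-module $M$, I pull back the action along $r$ to turn $M$ into a finite $G_0$-module $M_r$; because $r\circ i=\mathrm{id}_H$, the restriction $M_r|_H$ equals $M$ as an $H$-module. The same procedure (using $\widehat{r}$) realises $M_r$ as a continuous $\widehat{G_0}$-module whose restriction along $\widehat{i}$ is $M$ viewed as a continuous $\widehat{H}$-module. Thus I obtain the commutative diagram of cohomology groups
\begin{equation*}
\begin{array}{ccccc}
H^q(\widehat{H},M) & \xrightarrow{\widehat{r}^*} & H^q(\widehat{G_0},M_r) & \xrightarrow{\widehat{i}^*} & H^q(\widehat{H},M) \\
\alpha \downarrow & & \beta \downarrow & & \alpha \downarrow \\
H^q(H,M) & \xrightarrow{r^*} & H^q(G_0,M_r) & \xrightarrow{i^*} & H^q(H,M)
\end{array}
\end{equation*}
in which both horizontal compositions are the identity (functoriality of cohomology applied to $r\circ i=\mathrm{id}_H$ and $\widehat{r}\circ\widehat{i}=\mathrm{id}_{\widehat{H}}$), and the middle vertical map $\beta$ is an isomorphism by goodness of $G_0$.

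The final step is a quick diagram chase showing that the outer verticals $\alpha\colon H^q(\widehat{H},M)\to H^q(H,M)$ are isomorphisms. For injectivity, if $\alpha(x)=0$ then $\beta(\widehat{r}^*(x))=r^*(\alpha(x))=0$, hence $\widehat{r}^*(x)=0$, and applying $\widehat{i}^*$ gives $x=0$. For surjectivity, given $y\in H^q(H,M)$ set $x=\widehat{i}^*(\beta^{-1}(r^*(y)))$; then $\alpha(x)=i^*(\beta(\beta^{-1}(r^*(y))))=i^*r^*(y)=y$. This shows $H$ is good.

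The main technical obstacle I anticipate is strictly bookkeeping: one must verify that $M_r$ genuinely extends to a continuous $\widehat{G_0}$-module so that continuous cochains pull back along $\widehat{r}$, and that the naturality squares between discrete and continuous cohomology (i.e.\ the assignment $\alpha$) commute with the induced maps $r^*,i^*,\widehat{r}^*,\widehat{i}^*$. Both facts are standard once $G_0$ and $H$ are residually finite, which is why that hypothesis on $G$ is essential.
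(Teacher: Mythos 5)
Your proof is correct: the splitting $\widehat{r}\circ\widehat{i}=\mathrm{id}_{\widehat{H}}$ obtained by functoriality of profinite completion, together with goodness of the finite-index subgroup $G_0$ applied to the pulled-back module $M_r$, does force the comparison map $H^q(\widehat{H},M)\to H^q(H,M)$ to be an isomorphism by the diagram chase you give. The paper itself does not prove this lemma but simply cites \cite[Lemma 3.1]{MZ}, and your argument is essentially the standard retraction/transfer proof given there, so there is nothing further to compare.
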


\begin{proof}[Proof of Theorem \ref{Simplest_type_good}]
The proof of Theorem \ref{teo:standard-gferf} in 
\cite{BHW} actually shows that if $\Gamma<\SO^+(n,1)$
is an arithmetic lattice of simplest type,
then there exists a finite
index subgroup $\Gamma_1<\Gamma$ so that $\Gamma_1$ is a subgroup of a right-angled Coxeter group $C$ (see also \cite[Theorem 1.10]{BHW}). 
Moreover, the nature
of the construction in \cite{BHW} (see also \cite{CDW} and \cite{Haglund})
shows that $\Gamma_1$ sits as a quasi-convex subgroup of $C$. Since by \cite{Haglund} Coxeter groups virtually retract onto their quasi-convex subgroups, there exists a finite index subgroup $C_1<C$ together
with a retraction homorphism $r:C_1\rightarrow \Gamma_1$. 

The result will now follow from Lemma \ref{retract_good} once we establish that
$C$ is good (since as remarked goodness is preserved by commensurability). 
This is a consequence of the fact that any Coxeter group $C$ is virtually special
in the sense of \cite{HW} (see in particular 
\cite[Proposition 3.10]{HW}, and \cite{HW2}). More precisely, \cite{HW} provides a finite index subgroup
$C_2<C$ with $C_2$ a quasi-convex subgroup of a RAAG $A$, and hence
again by \cite{Haglund} (see also \cite{HW})
a finite index subgroup $A_2$ admitting a retraction onto
$C_2$. The group $A$ is good since it is a RAAG and so Lemma \ref{retract_good}
applies to show that $C_2$ is good. Since goodness is preserved by commensurability, $C$ is good as required. \end{proof}

The implication of goodness that we require is the
following \cite[Corollary 7.6]{Reid}:

\begin{prop}\label{prop:good+rf->torsionfree}
Suppose that $\Gamma$ is a residually finite, good group of finite cohomological dimension over $\mathbb{Z}$. Then $\widehat{\Gamma}$ is torsion-free.
\end{prop}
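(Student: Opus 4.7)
\textbf{Proof plan for Proposition \ref{prop:good+rf->torsionfree}.} The strategy is a standard cohomological dimension argument by contradiction. Assume that $\widehat{\Gamma}$ contains a non-trivial torsion element. By taking an appropriate power, we may assume this element has prime order $p$, and it generates a closed subgroup $P\cong\mathbb{Z}/p\mathbb{Z}$ of $\widehat{\Gamma}$. The goal is to derive a contradiction between the infinite $p$-cohomological dimension forced by $P$ and the finite cohomological dimension of $\Gamma$ transferred to $\widehat{\Gamma}$ via goodness.

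The first key input is that a non-trivial finite $p$-group has infinite continuous $p$-cohomological dimension: concretely, $H^q_{\mathrm{cts}}(P,\mathbb{F}_p)\cong\mathbb{F}_p$ for every $q\geq 0$ (with trivial action). The second key input is the standard fact (see Serre's \emph{Cohomologie Galoisienne}) that for a closed subgroup $P$ of a profinite group $G$, one has $\mathrm{cd}_p(P)\leq\mathrm{cd}_p(G)$; this is proved via induced modules and the Shapiro lemma in the category of discrete $G$-modules. Combining these gives $\mathrm{cd}_p(\widehat{\Gamma})=\infty$, so there exists arbitrarily large $q$ and a finite discrete $\widehat{\Gamma}$-module $M$ (which one can take to be $p$-primary, for instance the induced module $\mathrm{Ind}^{\widehat{\Gamma}}_P\mathbb{F}_p$, whose continuous cohomology is $H^q_{\mathrm{cts}}(P,\mathbb{F}_p)\neq 0$) with $H^q(\widehat{\Gamma},M)\neq 0$.

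On the other hand, any finite discrete $\widehat{\Gamma}$-module $M$ pulls back to a finite $\Gamma$-module via the natural map $\Gamma\to\widehat{\Gamma}$ (which is injective by residual finiteness). Since $\Gamma$ has finite cohomological dimension $d:=\mathrm{cd}_\mathbb{Z}(\Gamma)<\infty$, we have $H^q(\Gamma,M)=0$ for all $q>d$. Goodness of $\Gamma$ now forces the isomorphism $H^q(\widehat{\Gamma},M)\cong H^q(\Gamma,M)=0$ for every $q>d$, contradicting the previous paragraph. Hence no such $p$-torsion element exists, and $\widehat{\Gamma}$ is torsion-free.

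The main technical point is the inequality $\mathrm{cd}_p(P)\leq\mathrm{cd}_p(\widehat{\Gamma})$ for closed subgroups of profinite groups, which is the one piece that is not completely elementary but is a classical result in the cohomology of profinite groups. Everything else is bookkeeping: choosing a prime-order torsion element, producing a suitable finite coefficient module (e.g.\ via induction from $P$), and invoking goodness to move between the discrete and profinite cohomology theories.
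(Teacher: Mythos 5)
The paper does not actually prove this proposition: it is quoted verbatim from \cite[Corollary 7.6]{Reid}, so there is no in-paper argument to compare against. Your proof is the standard one underlying that citation (going back to Serre's \emph{Cohomologie Galoisienne}, I.3.3): a torsion element of $\widehat{\Gamma}$ generates a closed subgroup $P\cong\mathbb{Z}/p\mathbb{Z}$ with $\mathrm{cd}_p(P)=\infty$, while goodness together with $\mathrm{cd}_{\mathbb{Z}}(\Gamma)<\infty$ forces $\mathrm{cd}_p(\widehat{\Gamma})<\infty$, contradicting the monotonicity of $\mathrm{cd}_p$ under passage to closed subgroups. This is correct in outline and in substance. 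The one slip is the parenthetical suggestion that the nonvanishing finite coefficient module can be taken to be $\mathrm{Ind}^{\widehat{\Gamma}}_{P}\mathbb{F}_p$: when $P$ has infinite index in $\widehat{\Gamma}$ this (co)induced module is a discrete but \emph{infinite} torsion module, so it cannot be fed directly into the goodness isomorphism, which is stated only for finite modules. The standard repair is that continuous cohomology of a profinite group commutes with direct limits of discrete modules, and $\mathrm{Ind}^{\widehat{\Gamma}}_{P}\mathbb{F}_p$ is the direct limit of its finite submodules; hence $H^q\bigl(\widehat{\Gamma},\mathrm{Ind}^{\widehat{\Gamma}}_{P}\mathbb{F}_p\bigr)\neq 0$ already yields a finite submodule $M$ with $H^q(\widehat{\Gamma},M)\neq 0$, after which goodness and $q>\mathrm{cd}_{\mathbb{Z}}(\Gamma)$ give the contradiction exactly as you describe.
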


The fundamental groups of hyperbolic manifolds are well-known to be
residually finite by Malcev's theorem, and any hyperbolic manifold is
a $K(\pi_1(M),1)$ space.  Hence, this implies that $\pi_1(M)$ is of
finite cohomological dimension over $\mathbb{Z}$.  Using the remarks
above on good groups, and Proposition~\ref{prop:good+rf->torsionfree}
we therefore have the following corollary:

\begin{cor}\label{cor:closure-torsion-free}
Let $M$ be a hyperbolic manifold with good fundamental group. Then 
$\widehat{\pi_1(M)}$ is torsion-free.
\end{cor}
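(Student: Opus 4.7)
The plan is to verify the three hypotheses of Proposition \ref{prop:good+rf->torsionfree} for $\Gamma = \pi_1(M)$ and then invoke that proposition directly.

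First I would check residual finiteness. Since $M$ is a hyperbolic manifold, $\pi_1(M)$ embeds as a discrete subgroup of $\Isom(\matH^n)$, and in particular as a finitely generated subgroup of $\GL(N,\matR)$ for some $N$. By Malcev's theorem, every finitely generated linear group is residually finite, so $\pi_1(M)$ is residually finite. Goodness of $\pi_1(M)$ is given by hypothesis.

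Next I would establish that $\pi_1(M)$ has finite cohomological dimension over $\matZ$. The universal cover of $M$ is $\matH^n$, which is contractible, so $M$ is a $K(\pi_1(M),1)$ space. This means the group cohomology of $\pi_1(M)$ with any coefficients agrees with the singular cohomology of $M$. Since $M$ is a manifold of dimension $n$, its cohomology vanishes above degree $n$, and therefore $\mathrm{cd}_{\matZ}(\pi_1(M)) \leq n < \infty$.

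With all three hypotheses verified, Proposition \ref{prop:good+rf->torsionfree} applies and yields that $\widehat{\pi_1(M)}$ is torsion-free, as required. I do not anticipate any serious obstacle: the proof is a routine packaging of standard facts (Malcev's theorem, asphericity of hyperbolic manifolds) together with the hypothesis of goodness and the already-quoted Proposition \ref{prop:good+rf->torsionfree}.
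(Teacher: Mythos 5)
Your proposal is correct and follows essentially the same route as the paper: residual finiteness via Malcev's theorem applied to the linear group $\pi_1(M)$, finite cohomological dimension from $M$ being a $K(\pi_1(M),1)$, and then Proposition \ref{prop:good+rf->torsionfree}. No gaps.
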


\section{Completing the proof of Theorem \ref{main}}\label{section:main-proof}

Given Proposition \ref{extend_forms} and Corollary \ref{simplestembed}, 
the following result will complete the 
proof of Theorem \ref{main}.
The proof of Proposition \ref{torsionfree_contain} is given in a much more
general context in \cite{DBMR}, however, for completeness we
give a proof in \S \ref{section:torsionfree_contain_proof} adapted to the case at hand.  

\begin{prop}
\label{torsionfree_contain}
Let $\Gamma$ be a torsion-free arithmetic lattice in $\Or^+(n,1)$, of 
simplest type, and $\Lambda$ an arithmetic lattice of simplest type
in $\Or^+(n+1,1)$ such that $\Gamma<\Lambda$. Then there is a torsion-free
subgroup $\Lambda_1<\Lambda$ of finite index such that $\Gamma<\Lambda_1$. 
\end{prop}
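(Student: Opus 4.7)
The plan is to work in the profinite completion $\widehat{\Lambda}$ and realise $\Lambda_1$ as $V\cap\Lambda$ for a suitable clopen subgroup $V\subset\widehat{\Lambda}$ containing the closure of $\Gamma$. Set $K:=Cl_{\widehat{\Lambda}}(\Gamma)$. Because $\Gamma$ sits inside $\Lambda$ as the stabiliser of a totally geodesic copy of $\matH^n\subset\matH^{n+1}$ on which it acts with finite covolume, the convex core of the $\Gamma$-action on $\matH^{n+1}$ coincides with that hyperplane and has finite volume; hence $\Gamma$ is geometrically finite in $\Lambda$, and Lemma \ref{lemma:closure-isomorphic} identifies $K$ with $\widehat{\Gamma}$. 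Since $\Gamma$ is a torsion-free arithmetic lattice of simplest type, Theorem \ref{Simplest_type_good} shows that it is good, and because $\Gamma$ has finite cohomological dimension over $\matZ$, Corollary \ref{cor:closure-torsion-free} yields that $\widehat{\Gamma}$, and therefore $K$, is torsion-free.

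I would then use the standard fact that the arithmetic lattice $\Lambda$ contains only finitely many conjugacy classes of non-identity elements of finite order; this follows from the finiteness of orbifold strata of $\matH^{n+1}/\Lambda$, or equivalently from Selberg's lemma together with the finiteness of isotropy types in the Borel--Serre compactification. Let $t_1,\ldots,t_N$ be representatives and let $C_i:=\{\lambda t_i\lambda^{-1}:\lambda\in\Lambda\}$. The continuous conjugation map $\widehat\lambda\mapsto\widehat\lambda t_i\widehat\lambda^{-1}$ has compact image containing the dense set $C_i$, so $\overline{C_i}=\{\widehat\lambda t_i\widehat\lambda^{-1}:\widehat\lambda\in\widehat{\Lambda}\}$. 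Every such conjugate has the same finite order as $t_i$, so each $\overline{C_i}$ consists entirely of non-identity torsion elements and is therefore disjoint from the torsion-free $K$.

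A standard separation argument in the profinite group $\widehat{\Lambda}$ now concludes the proof. The clopen subgroups of $\widehat{\Lambda}$ containing $K$ form a filtered family whose intersection is exactly $K$, so by compactness of $\widehat{\Lambda}$ there is, for each $i$, a clopen subgroup $V_i\supset K$ with $V_i\cap\overline{C_i}=\emptyset$. Set $V:=\bigcap_{i=1}^N V_i$ (still clopen, being a finite intersection) and $\Lambda_1:=V\cap\Lambda$. Then $\Lambda_1$ has finite index in $\Lambda$ because $V$ is open in $\widehat{\Lambda}$, contains $\Gamma$ because $\Gamma\subset K\subset V$, and is torsion-free because every non-identity element of finite order in $\Lambda$ lies in some $C_i$ and is thereby excluded from $V$.

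The crucial obstacle — and the reason this proposition cannot be placed before Theorem \ref{Simplest_type_good} — is the torsion-freeness of the closed subgroup $K$, which rests on the goodness of $\Gamma$ and hence on the Agol--Wise virtual specialness results underlying \cite{BHW}. Without this input nothing prevents $\widehat{\Gamma}$ from acquiring torsion in the profinite topology, and the whole separation argument would collapse. A subsidiary point worth verifying carefully is the finiteness of conjugacy classes of torsion elements in $\Lambda$ — needed so that $\bigcap_i V_i$ remains a clopen subgroup — but this is standard for arithmetic lattices in $\Or^+(n+1,1)$.
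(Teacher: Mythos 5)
Your proof is correct and follows essentially the same strategy as the paper's: establish that $Cl(\Gamma)$ in $\widehat{\Lambda}$ is torsion-free via goodness and Lemma~\ref{lemma:closure-isomorphic}, show that the closures of the finitely many conjugacy classes of non-trivial torsion elements consist entirely of torsion and hence miss $Cl(\Gamma)$, and then separate by finite-index (clopen) subgroups. Your packaging of the separation step (compactness applied to the filtered family of clopen subgroups containing $Cl(\Gamma)$) and of the torsion-of-closures step (continuity of the conjugation map on $\widehat{\Lambda}$) is a slightly cleaner, purely topological version of the paper's sequence-based arguments, and you also make explicit the geometric finiteness of $\Gamma$ in $\Lambda$ that the paper leaves implicit.
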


Deferring the proof of Proposition \ref{torsionfree_contain} for now,
we complete the proof of Theorem \ref{main}.

\begin{proof}[Proof of Theorem \ref{main}] 
Let $M={\matH}^n/\Gamma$
be an orientable arithmetic hyperbolic $n$-manifold of simplest type. 
We will assume that $n$ is even. The odd-dimensional case is handled exactly the same
way replacing $\Gamma$ by $\Gamma^{(2)}$.

By Corollary \ref{simplestembed}, there exists exists an arithmetic
lattice $\Lambda$ of simplest type in $\SO^+(n+1,1)$ such that
$\Gamma<\Lambda$.
By Proposition \ref{torsionfree_contain}, we can find
a torsion-free subgroup $\Lambda_1<\Lambda$ with $\Gamma<\Lambda_1$.
Now by Theorem \ref{teo:standard-gferf}, $\Lambda_1$ is GFERF, and a standard
consequence of this (see \cite{Scott1}) is that $M$ embeds in
a finite sheeted cover of ${\matH}^{n+1}/\Lambda_1$.  

The final sentence in the statement of Theorem \ref{main} follows from the proof of the Claim in Proposition \ref{extend_forms} and the fact that closed arithmetic manifolds of simplest type are associated with quadratic forms either over a finite extension $k$ of $\mathbb{Q}$, $k \neq \mathbb{Q}$, or with quadratic forms over $\mathbb{Q}$ which are anisotropic. 

As we have seen above, isotropic quadratic forms over $\mathbb{Q}$ give rise to necessarily non-compact arithmetic manifolds. This completes 
the proof.\end{proof}

\subsection{Proof of Proposition \ref{torsionfree_contain}}\label{section:torsionfree_contain_proof}

In what follows $\Gamma$ will always denote an arithmetic lattice of 
simplest type in $\Or^+(n,1)$.

\begin{lemma}\label{lemma:conjugacytorsion}
Suppose that $\eta \in \Gamma$ has finite order, and $[\eta]$ denotes the
conjugacy class of $\eta$ in $\Gamma$. 
Then $\text{Cl}([\eta])\subset \widehat{\Gamma}$
consists entirely of elements of finite order.
\end{lemma}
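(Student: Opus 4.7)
The plan is to observe that the torsion condition ``$x^m = 1$'' defines a closed subset of the profinite completion, and then note that every element of the conjugacy class $[\eta]$ lies inside this closed set for a single fixed $m$.

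More precisely, I would first set $m$ to be the order of $\eta$ in $\Gamma$, which is finite by hypothesis. Every conjugate $g\eta g^{-1}$ (for $g \in \Gamma$) also has order $m$, so the entire conjugacy class $[\eta] \subset \Gamma \subset \widehat{\Gamma}$ is contained in the set
$$S_m \;:=\; \{\, x \in \widehat{\Gamma} \,:\, x^m = 1 \,\}.$$
The second step is to verify that $S_m$ is a closed subset of $\widehat{\Gamma}$. This follows from the fact that $\widehat{\Gamma}$ is a Hausdorff topological group: the $m$-th power map $x \mapsto x^m$ is continuous (being a composition of continuous multiplications) and $\{1\}$ is closed, so $S_m$ is the preimage of a closed set under a continuous map. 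Therefore $\text{Cl}([\eta]) \subset \text{Cl}(S_m) = S_m$, and every element of $\text{Cl}([\eta])$ satisfies $x^m = 1$, hence has finite order dividing $m$.

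There is essentially no obstacle here: the statement is the general topological-group fact that ``being a root of $x^m = 1$'' is a closed condition, combined with the elementary observation that conjugation preserves the order of a torsion element. The only point worth recording is the use of $\widehat{\Gamma}$ as a topological group rather than merely a profinite set, which is built into the standard construction of the profinite completion.
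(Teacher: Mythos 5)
Your proof is correct, and it takes a genuinely different route from the paper's. The paper argues pointwise: given $\lambda\in \mathrm{Cl}([\eta])$, it picks a sequence $\eta_j\in[\eta]$ converging to $\lambda$, writes $\eta_j=\beta_j\eta\beta_j^{-1}$, extracts a convergent subsequence of the conjugators $\beta_j$ using compactness of $\widehat{\Gamma}$, and concludes that $\lambda$ is conjugate to $\eta$ \emph{in} $\widehat{\Gamma}$, hence of finite order. Your argument instead observes that $[\eta]$ sits inside the set $S_m=\{x\in\widehat{\Gamma}: x^m=1\}$, which is closed because $\widehat{\Gamma}$ is a Hausdorff topological group, so the closure of $[\eta]$ stays inside $S_m$. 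Your version is shorter and more robust: it sidesteps the use of sequences to detect closure (harmless here, since $\widehat{\Gamma}$ is metrizable for finitely generated $\Gamma$, but a point one would otherwise have to justify or replace by nets), and it yields the explicit extra information that every element of $\mathrm{Cl}([\eta])$ has order dividing $m$. The paper's version yields a different piece of extra information, namely that $\mathrm{Cl}([\eta])$ lies in the $\widehat{\Gamma}$-conjugacy class of $\eta$; neither strengthening is needed for the application in Proposition \ref{torsionfree_contain}, where only the finite-order conclusion is used.
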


\begin{proof}
Let $\lambda$ be an element of $\text{Cl}([\eta])$. Then there exists a convergent sequence $\{\eta_j \}_{j=0}^{\infty}$ of elements of $[\eta]$ whose limit is $\lambda$. For each $\eta_j$, by definition there exists an element 
$\beta_j\in \Gamma$ such that $\beta_j^{-1} \eta_j \beta_j=\eta$.

The sequence $\{\beta_j\}_{j=0}^{\infty}$ is a sequence in the compact group 
$\widehat{\Gamma}$ and therefore admits a  convergent subsequence $\{\beta_l\}_{l=0}^{\infty}$ with limit $\beta$. By continuity of taking inverses, we see that also $\{\beta_l^{-1}\}_{l=0}^{\infty}$ converges to $\beta^{-1}$. Finally:
\begin{equation}
\eta=\text{lim}\,\beta^{-1}_l\cdot \text{lim}\, \eta_l \cdot \text{lim}\,\beta_{l}=\beta^{-1}\lambda \beta. 
\end{equation}
Therefore $\eta$ and $\lambda$ are conjugate in $\widehat{\Gamma}$, 
and since $\eta$ has finite order so does $\lambda$.
\end{proof}

We now commence with the proof of Proposition \ref{torsionfree_contain}.

\begin{proof}
Let $\Gamma<\Lambda$ be as in the statement of Proposition 
\ref{torsionfree_contain}, and $h\in \Lambda$ 
be a non-trivial element of finite order. By
Lemma \ref{lemma:conjugacytorsion}, the closure of the conjugacy class
of $h$ in $\widehat{\Lambda}$ consists entirely of
elements of finite order. By Lemma~\ref{lemma:closure-isomorphic} and
Corollary \ref{cor:closure-torsion-free}, the closure of $\Gamma$ in
$\widehat{\Lambda}$ is torsion-free. Therefore:
\begin{equation}\label{eq:nointersection}
Cl(\Gamma)\cap Cl([h])=\emptyset.
\end{equation}

From this we can conclude that there is a finite-index subgroup $H$ of 
$\Lambda$ which contains $\Gamma$ and is disjoint from $[h]$. To see this,
suppose to the contrary, and fix a total ordering $\{H_1,H_2,\dots\}$ on the finite-index subgroups of $\Lambda$ which contain $\Gamma$. We have that $\Gamma = \bigcap^{\infty}_{i=1}\, H_i$, since $\Gamma$ is separable in $\Lambda$. Then, for every index $m$, we can find an element $h_m$ in $[h]$ which belongs to the intersection of all $H_i$ with $i\leq m$.

The closure $Cl([h])$ is compact and therefore, up to passing to a convergent subsequence, we can suppose $\lim_{m\to \infty}h_m=h\in Cl([h])$. The limit $h$ necessarily belongs to the closures of all the $H_i$'s, and therefore to $Cl(\Gamma)$, 
which is a contradiction because of (\ref{eq:nointersection}).

Now since $\Lambda$ is a lattice, 
there exist only a finite number $[h_1],\dots,[h_n]$ of conjugacy classes of 
non-trivial elements of finite order in $\Lambda$. To see this, 
first recall that being a lattice, $\Lambda$ admits a convex
finite sided fundamental polyhedron $D$. 
Now, up to conjugation, we can suppose that 
a non-trivial element of finite order $h$ fixes a face $F$ of $D$. 
Such a face $F$ will be adjacent to only a
finite number of other copies of $D$, say $D_1,\dots,D_k$, and the
element $h$ necessarily maps $D$ to one of the $D_i$'s, say $D_l$. Moreover, no other element of $\Lambda$ maps $D$ to
$D_l$.  Since there are only a finite number of choices for the face
$F$ and for the domain $D_l$, the result follows.

For each such class $[h_i]$, there exists a finite index subgroup 
$F_i<\Lambda$ which contains $\Gamma$ and is disjoint from $[h_i]$.
Clearly the intersection 
\begin{equation}
\Lambda_1=\bigcap_{i=1}^n F_i
\end{equation} 
is a finite index subgroup of $\Lambda$ which contains $\Gamma$  
and is torsion-free. This completes the proof.
\end{proof}

\section{Proof of Theorem \ref{main2}}\label{section:proof-main2}

Throughout this section $\mathrm{f}$ is an admissible quadratic form of signature $(n,1)$
and $N={\matH}^n/H$ a non-orientable manifold double-covered by 
$M={\matH}^n/\Gamma$ with $H$ (and hence $\Gamma$)
arithmetic of simplest type.

Theorem \ref{main2} will be a consequence of the following theorem on
recalling from \S \ref{section:arithm-gps} that when $n$ is even all arithmetic groups of
simplest type arise from $\SO(f,k)$.

\begin{teo}
\label{generalmain2}
In the notation above, if $H$ is conjugate into $\Or^+(f,k)$ then
$N$ bounds geometrically.\end{teo}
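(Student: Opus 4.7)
My approach is to extend $\mathrm{f}$ to an $(n+1)$-ary admissible form $\mathrm{g}$ and, by twisting the embedding of $H$ with the reflection across a totally geodesic $\matH^n\subset\matH^{n+1}$, sit an isomorphic copy of $H$ inside $\Or^+(\mathrm{g},k)$ which preserves an upper half-space. The separability and torsion-free-extension results of the preceding sections then produce a finite-volume hyperbolic $(n+1)$-manifold in which a copy of $N$ embeds as a two-sided totally geodesic submanifold, from which the required $W$ is obtained by cutting. (Since the theorem's hypothesis is that $H$ is conjugate into $\Or^+(\mathrm{f},k)$, I conjugate and assume $H<\Or^+(\mathrm{f},k)$ outright.)

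Concretely, I would apply Proposition~\ref{extend_forms} to write $\mathrm{g}=ay^2+\mathrm{f}$, use the embedding $\psi\colon\Or^+(\mathrm{f},R)\hookrightarrow\SO^+(\mathrm{g},R)$, $\psi(M)=\mathrm{diag}(\det M,M)$, of Corollary~\ref{improved_embedding}, and let $r=\mathrm{diag}(-1,1,\dots,1)\in\Or^+(\mathrm{g},k)\setminus\SO^+(\mathrm{g},k)$ be the reflection across $P=\{y=0\}\cap\matH^{n+1}\cong\matH^n$. Fixing an orientation-reversing $\sigma\in H\setminus\Gamma$, I set
\[
K\;=\;\psi(\Gamma)\,\cup\,\psi(\sigma)\psi(\Gamma)\,r\;\subset\;\Or^+(\mathrm{g},k).
\]
Using $r^2=1$ and the fact that $r$ commutes with $\psi(\Or^+(\mathrm{f}))$ (immediate from the block form of $\psi(M)$), a direct computation shows $K$ is a subgroup and that $\psi(\gamma)\mapsto\gamma$, $\psi(\sigma)\psi(\gamma)r\mapsto\sigma\gamma$ is an isomorphism $K\cong H$. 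Every element of $K$ preserves both $P$ and each half-space $\matH^{n+1}_\pm=\{\pm y>0\}$: for the second coset, the sign flip in $\psi(\sigma\gamma)=\mathrm{diag}(-1,\sigma\gamma)$ is cancelled by the one in $r$. Since $r$ acts trivially on $P$, the action of $K$ on $P$ is exactly the action of $H$ on $\matH^n$, so $P/K=N$.

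Next, Proposition~\ref{subgroupsofkpoints} embeds $K$ in an arithmetic lattice $\Lambda^\ast<\Or^+(\mathrm{g},k)$, which is automatically of simplest type. Since $K\cong H$ is torsion-free and good (Theorem~\ref{Simplest_type_good}), the argument of Proposition~\ref{torsionfree_contain} yields a torsion-free finite-index $\Lambda_0<\Lambda^\ast$ still containing $K$. By Theorem~\ref{teo:standard-gferf}, $\Lambda_0$ is GFERF; and $K$ is geometrically finite in $\Lambda_0$ since it acts with finite covolume on the totally geodesic $P$. The standard Scott separability argument (as already used in the proof of Theorem~\ref{main}) then produces a finite-index $\Lambda_1<\Lambda_0$ with $K\subset\Lambda_1$, $\mathrm{Stab}_{\Lambda_1}(P)=K$, and no two distinct $\Lambda_1$-translates of $P$ intersecting. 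Thus $N=P/K$ embeds as a totally geodesic submanifold of $W'=\matH^{n+1}/\Lambda_1$, and it is two-sided since $K$ preserves each half-space.

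Finally, to obtain $N$ as the total boundary of a single piece, if $N$ is already separating in $W'$ then cutting yields two hyperbolic manifolds each with totally geodesic boundary $N$, and either is the required $W$. Otherwise I pass to the double cover $\widetilde{W'}\to W'$ classified by the $\matZ/2$-Poincar\'e dual of $[N]\in H_n(W';\matZ/2)$; this homomorphism is trivial on $\pi_1(N)$ since loops in $N$ have zero $\matZ/2$-intersection with $N$, so the preimage of $N$ in $\widetilde{W'}$ splits as two disjoint separating copies $N_1\sqcup N_2$, and cutting along $N_1$ and keeping the component that does not contain $N_2$ yields the desired $W$ with $\partial W=N$ totally geodesic. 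The principal obstacle is engineering $\Lambda_1$ so that $\mathrm{Stab}_{\Lambda_1}(P)=K$ and the $\Lambda_1$-translates of $P$ are pairwise disjoint simultaneously, but both conditions are standard consequences of GFERF applied to the geometrically finite subgroup $K<\Lambda_0$, in the same spirit as the embedding step in the proof of Theorem~\ref{main}.
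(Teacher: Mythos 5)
Your twist by $r$ undoes precisely the mechanism the paper's proof relies on, and this opens a genuine gap. Since $\psi(\sigma)r=\mathrm{diag}(1,\sigma)$, your group $K$ is just the trivial extension $\{\mathrm{diag}(1,h):h\in H\}$: the orientation-reversing elements of $H$ stay orientation-reversing on $\matH^{n+1}$ and preserve the two sides of $P$. Hence your ambient manifold $W'$ is non-orientable and $N$ sits in it \emph{two-sided}. The paper instead keeps $\psi(H)=\{\mathrm{diag}(\det h,h)\}\subset\SO^+(\mathrm{g},k)$ --- this is the entire point of Corollary \ref{improved_embedding} --- so that $\psi(\sigma)$ is orientation-preserving on $\matH^{n+1}$ but swaps the sides of $P$; then $N$ embeds \emph{one-sided} in an \emph{orientable} finite cover, and Lemma \ref{untwisting_nonorientable} (the untwisting lemma of \cite{LR}) says that cutting along a one-sided non-orientable totally geodesic hypersurface of an orientable manifold yields a connected orientable manifold whose connected totally geodesic boundary is the orientation double cover $M$ of $N$. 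That is the actual content of the theorem: since ``bounds geometrically'' is defined in the paper only for orientable manifolds, the conclusion (compare Theorem \ref{main2}) is that $M$ bounds. Your construction, even where it works, produces a non-orientable manifold with boundary the non-orientable $N$, which is a different (and for the paper's purposes insufficient) statement; recovering the intended one would still require passing to orientation double covers at the end, a step you do not take.

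Moreover, the two-sided route fails on its own terms in the non-separating case. In the double cover $\widetilde{W'}\to W'$ classified by mod $2$ intersection with $N$, the preimage of $N$ is indeed two disjoint lifts $N_1\sqcup N_2$, but neither lift is separating: $\widetilde{W'}$ is assembled from two copies of $W'$ cut along $N$, glued to each other along \emph{both} lifts, so deleting $N_1$ alone leaves the two copies still joined along $N_2$, hence connected. Cutting along $N_1$ therefore yields a single connected piece with two boundary copies of $N$ --- exactly the dichotomy flagged in the introduction, where a two-sided non-separating embedding only shows that two copies of the submanifold cobound. Only the union $N_1\cup N_2$ separates, and cutting along both gives pieces with boundary $N\sqcup N$, not $N$. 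The one-sided embedding of the paper's proof avoids this issue entirely, which is why Corollary \ref{improved_embedding} is applied without the extra reflection.
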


\begin{proof} For convenience we identify $H<\Or^+(f,k)$, and so 
$\Gamma < \SO^+(f,k)$. 
Using Corollary \ref{simplestembed} we can find an arithmetic hyperbolic
$(n+1)$-orbifold $X={\matH}^{n+1}/\Lambda$ with $H<\Lambda$. As in the proof Theorem \ref{main} we can use 
Proposition \ref{torsionfree_contain} to 
pass to a further finite-sheeted cover $X_1\rightarrow X$
so that $N$ embeds. Indeed, using Corollary \ref{improved_embedding}
$X_1$ can be chosen to be orientable, and 
the proof is now completed by
the well-known fact:
\begin{lemma}[\cite{LR}]
\label{untwisting_nonorientable}
Suppose that $N$ is a codimension $1$ non-orientable embedded
totally geodesic submanifold of an orientable hyperbolic manifold $X$, and $M$ is the orientation cover of $N$. Then $M$ bounds geometrically.
\end{lemma}\vspace{-2em}\end{proof}

\section{Low dimensions}

In this section we discuss bounding in the case of $n=3$ and $n=2$.

\subsection{Bianchi groups} The statement of Corollary \ref{noncompact2} will follow once we identify the groups $\PSL(2,O_d)$ as subgroups of $\SO(f,\matZ)$ for certain isotropic quaternary quad\-ratic forms $\mathrm{f}$, c.f. \cite[Theorem 2.1]{JM}. Thus, we avoid passing to a $\mathrm{mod}\, 2$-cover. In order to arrange for bounding we take $M$ to be a double cover of a non-orientable manifold $N={\matH}^3/H$ with $H$ as in Theorem \ref{generalmain2}.

\subsection{Surfaces}\label{section:surfaces}
The question as to which closed surfaces bound
geometrically a hyperbolic 3-manifold has been of some interest.  It
follows from the work of Brooks \cite{Br} that the set of surfaces
that bound geometrically is a countable dense subset of the appropriate
moduli space, but beyond a few examples (c.f. \cite{Zim1} and
\cite{Zim2}) little by way of explicit families of
surfaces are known to bound geometrically. Indeed, \cite{Zim2} is
concerned with finding such explicit families where the surfaces embed
totally geodesically. One consequence of Theorems \ref{main} and
\ref{main2} is the following.

\begin{cor}
\label{surfacebound}
Let $\Sigma={\matH}^2/\Gamma$ be a closed arithmetic hyperbolic 2-manifold. Then $\Sigma$ embeds as a totally geodesic surface in an arithmetic hyperbolic 3-manifold. If $\Sigma$ double covers a non-orientable surface then $\Sigma$ bounds geometrically.
\end{cor}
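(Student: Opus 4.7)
The plan is to verify that the hypotheses of Theorems \ref{main} and \ref{main2} are automatically satisfied in dimension two, and then to invoke them directly. First I would observe that $n=2$ is even, so the result of Vinberg--Shvartsman cited above Corollary \ref{alleven} (see \cite{VS}) applies: every arithmetic subgroup of $\Isom(\matH^2)$ is arithmetic of simplest type. Hence the given closed arithmetic hyperbolic surface $\Sigma = \matH^2/\Gamma$ is arithmetic of simplest type, and an orientation-preserving finite-index subgroup of $\Gamma$ lies in $\SO^+(\mathrm{f},\matR)$ for some admissible ternary form $\mathrm{f}$.

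For the first statement, I would apply part (1) of Theorem \ref{main}, which in the even-dimensional case asserts that $\Sigma$ embeds as a totally geodesic submanifold of an orientable arithmetic hyperbolic $3$-manifold $W$. No passage to a universal $\modu\,2$ abelian cover is needed because $n=2$ is even. If $\Sigma$ happens to be closed and arises from a form over $k\neq\matQ$, the moreover clause of Theorem \ref{main} gives a closed $W$; if $\Sigma$ is closed but defined over $\matQ$ (so $\mathrm{f}$ is anisotropic), the analysis in the proof of the Claim inside Proposition \ref{extend_forms} shows we may take $\mathrm{g}=q\,y^2+\mathrm{f}$ anisotropic as well, so that the ambient $W$ can again be taken closed.

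For the second statement, I would appeal directly to Theorem \ref{main2} with $n=2$. The hypotheses---that $\Sigma$ is an orientable arithmetic hyperbolic $n$-manifold of simplest type with $n$ even which double covers a non-orientable hyperbolic manifold---are precisely what we have assumed, together with the automatic simplest-type property from the first paragraph. Thus $\Sigma$ bounds geometrically.

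In short, the only real content of the corollary is the translation from the hypothesis ``closed arithmetic hyperbolic surface'' to the hypotheses of Theorems \ref{main} and \ref{main2}; the even-dimensional coincidence that every $\Isom(\matH^2)$-arithmetic lattice is of simplest type makes this translation immediate. No step should present a serious obstacle, since both main theorems have already been established and their conclusions specialise cleanly to $n=2$.
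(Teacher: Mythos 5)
Your proposal is correct and follows essentially the same route as the paper, which presents this corollary as an immediate consequence of Theorems \ref{main} and \ref{main2} together with the fact (already used for Corollary \ref{alleven}) that in even dimensions every arithmetic lattice in $\Isom(\matH^n)$ is of simplest type. Your extra remarks on arranging the ambient manifold to be closed via the anisotropic case of the Claim in Proposition \ref{extend_forms} are accurate but not needed for the statement as given.
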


Since the $(2,3,7)$ triangle group is arithmetic, any Hurwitz surface is arithmetic. Hence as a special case of Corollary \ref{surfacebound} we recover the following result of B. Zimmermann \cite{Zim2}.

\begin{cor}
\label{hurwitzbound}
Let $\Sigma$ be a Hurwitz surface. Then $\Sigma$ embeds as a totally geodesic surface in an arithmetic hyperbolic 3-manifold.
\end{cor}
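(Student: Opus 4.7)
The plan is to exhibit a Hurwitz surface as an arithmetic hyperbolic $2$-manifold and then invoke the first statement of Corollary \ref{surfacebound} directly. Recall that a Hurwitz surface is, by definition, a closed Riemann surface $\Sigma$ of genus $g\geq 2$ whose automorphism group attains the Hurwitz bound $84(g-1)$; equivalently, $\Sigma={\matH}^2/\Gamma$ where $\Gamma$ is a torsion-free normal subgroup of finite index in the $(2,3,7)$-triangle group $\Delta(2,3,7)$.

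The first step is to recall (from Takeuchi's classification of arithmetic triangle groups) that $\Delta(2,3,7)$ is an arithmetic Fuchsian group; in fact it is a subgroup of $\Or^+(\mathrm{f})$ for an admissible ternary quadratic form $\mathrm{f}$ of signature $(2,1)$ defined over the totally real field $\matQ(\cos(\pi/7))$. Since arithmeticity is preserved by commensurability, every finite-index subgroup of $\Delta(2,3,7)$ is itself an arithmetic Fuchsian group of simplest type (every Fuchsian group is automatically of simplest type, as $n=2$ is even). In particular the uniformizing group $\Gamma$ of any Hurwitz surface $\Sigma$ is of this kind, so $\Sigma$ is a closed arithmetic hyperbolic $2$-manifold.

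At this point the conclusion is immediate from the first sentence of Corollary~\ref{surfacebound}: every closed arithmetic hyperbolic $2$-manifold embeds as a totally geodesic surface in some arithmetic hyperbolic $3$-manifold. Applying this to $\Sigma$ yields the desired embedding into an arithmetic hyperbolic $3$-manifold $W$.

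There is essentially no obstacle beyond citing the arithmeticity of $\Delta(2,3,7)$; all of the substantive work (constructing the admissible quaternary form $\mathrm{g}=q y^2+\mathrm{f}$ via Proposition~\ref{extend_forms}, arranging torsion-freeness via Proposition~\ref{torsionfree_contain}, and promoting subgroup containment to an embedding of manifolds via the GFERF property in Theorem~\ref{teo:standard-gferf}) has already been encapsulated in Corollary~\ref{surfacebound}. Thus the proof reduces to the single observation that Hurwitz surfaces are arithmetic.
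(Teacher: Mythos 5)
Your proposal is correct and matches the paper's own argument: the authors likewise observe that since the $(2,3,7)$ triangle group is arithmetic every Hurwitz surface is a closed arithmetic hyperbolic $2$-manifold, and then deduce the corollary as a special case of Corollary~\ref{surfacebound}. The extra detail you supply (Takeuchi's classification, preservation of arithmeticity under commensurability, and the fact that all arithmetic Fuchsian groups are of simplest type because $n=2$ is even) is a correct elaboration of the same one-line reduction.
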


Corollary \ref{hurwitzbound} applies in particular to the Klein quartic curve. However, by \cite{Sing} the Klein quartic does not double cover a non-orientable surface and so we cannot deduce from Theorem~\ref{generalmain2} that the Klein quartic bounds.  However, we can use the results of \cite{Sing}
to deduce the following.

\begin{cor}
\label{newhurwitzbound}
There are infinitely many Hurwitz surfaces that bound geometrically.
\end{cor}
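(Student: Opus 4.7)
The plan is to combine Corollary \ref{surfacebound} with Singerman's analysis of symmetries of Hurwitz surfaces in \cite{Sing}. Since the $(2,3,7)$ triangle group $\Delta$ is arithmetic, every Hurwitz surface $\Sigma = {\matH}^2/K$ (with $K$ a torsion-free normal subgroup of finite index in $\Delta$) is arithmetic, so by Corollary \ref{surfacebound} it suffices to exhibit infinitely many such $\Sigma$ that double cover a non-orientable surface.

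First I would reformulate the double-cover condition group-theoretically: a Hurwitz surface $\Sigma = {\matH}^2/K$ doubly covers a non-orientable surface if and only if $\Sigma$ admits a fixed-point-free antiholomorphic involution, equivalently if and only if there is a torsion-free NEC group $K^*$ containing $K$ as its orientation-preserving subgroup of index $2$. Since $K \triangleleft \Delta$ and $\Delta$ sits as the orientation-preserving index-$2$ subgroup of the extended triangle group $\Delta^*$, producing such a $K^*$ amounts to finding a surjection $\Delta^* \rightarrow G^*$ onto a finite group $G^*$ containing the Hurwitz group $G = \Delta/K$ as an index-$2$ subgroup, with the additional property that no reflection of $\Delta^*$ has image fixing a point of $\Sigma$.

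Next I would invoke \cite{Sing}, which provides precisely the classification needed. There the Hurwitz surfaces admitting a symmetry are identified and, within this class, those for which the symmetry has fixed mirrors (as for the Klein quartic) are separated from those for which the symmetry acts freely. Singerman exhibits infinite families of Hurwitz groups for which the free case occurs, for instance within Macbeath's series $\PSL(2,\mathbb{F}_q)$ for suitable primes $q$, by constructing the required $\mathbb{Z}/2$-extension $G^*$ of $G$ and checking that every reflection in $\Delta^*$ maps to an element of $G^* \setminus G$ acting without fixed points on $\Sigma$. Applying Corollary \ref{surfacebound} to each member of such a family then produces infinitely many bounding Hurwitz surfaces.

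The main obstacle is precisely this last step: verifying fixed-point-freeness for infinitely many Hurwitz surfaces. The existence of the extension $\Delta^*/K \to G^*$ alone is not enough — the Klein quartic example shows that the antiholomorphic involution can be forced to have mirrors even when the Hurwitz group extends. Thus the crux of the argument is extracting from \cite{Sing} an infinite family of Hurwitz quotients in which this pathology is absent, after which Corollary \ref{surfacebound} applies uniformly.
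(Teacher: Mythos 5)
Your proposal follows essentially the same route as the paper, which offers no explicit proof beyond combining Corollary \ref{surfacebound} with Singerman's results in \cite{Sing}: one extracts from \cite{Sing} infinitely many Hurwitz surfaces admitting a free orientation-reversing involution (equivalently, whose surface group extends to a torsion-free NEC group), and then applies Corollary \ref{surfacebound}. Your identification of fixed-point-freeness as the crux, with the Klein quartic as the example where it fails, matches exactly the caveat the paper records just before stating the corollary.
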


\section{Appendix}

In this appendix, we prove the following statement:

\begin{lemma}\label{lemma:missing-number}
Let $\mathrm{f}$ be an indefinite ternary anisotropic quadratic form defined over $\mathbb{Q}$. Then there exist infinitely many rational numbers of either sign not represented by $\mathrm{f}$ over $\mathbb{Q}$.
\end{lemma}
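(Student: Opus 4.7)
The plan is to exhibit a local obstruction to representation at a suitable finite prime, and then to use weak approximation to promote this obstruction to infinitely many rationals of either sign.

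First, I would apply the Hasse--Minkowski theorem: $\mathrm{f}$ being anisotropic over $\mathbb{Q}$ forces it to be anisotropic over some completion $\mathbb{Q}_v$. Since $\mathrm{f}$ is indefinite, $\mathrm{f}$ is isotropic over $\mathbb{R}$, so there exists a finite prime $p$ at which $\mathrm{f}$ is anisotropic over $\mathbb{Q}_p$.

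Next I would invoke the classical fact that an anisotropic ternary quadratic form over $\mathbb{Q}_p$ is not universal, i.e., there exists some $c_0\in\mathbb{Q}_p^*$ that is not represented by $\mathrm{f}$ over $\mathbb{Q}_p$. The cleanest way to see this is via the uniqueness (up to equivalence) of the four-dimensional anisotropic form $\mathrm{h}$ over $\mathbb{Q}_p$ (the norm form of the quaternion division algebra): choosing $c_0\in\mathbb{Q}_p^*$ so that the discriminant of $\mathrm{f}\perp\langle -c_0\rangle$ agrees with that of $\mathrm{h}$, one shows that this four-dimensional form is anisotropic, which by definition means $c_0$ is not represented by $\mathrm{f}$. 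Alternatively, one checks by counting square classes in $\mathbb{Q}_p^*/\mathbb{Q}_p^{*2}$ that the set of values represented by an anisotropic ternary form is a proper subset.

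Third, observe that the set $S\subset\mathbb{Q}_p^*$ of values \emph{not} represented by $\mathrm{f}$ over $\mathbb{Q}_p$ is nonempty, is a union of $\mathbb{Q}_p^*$-square classes, and hence is open (as $(\mathbb{Q}_p^*)^2$ is open in $\mathbb{Q}_p^*$). Non-representation over $\mathbb{Q}_p$ implies non-representation over $\mathbb{Q}$, so it will suffice to produce infinitely many elements of $\mathbb{Q}^*\cap S$ of each sign.

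Finally, I would apply weak approximation for $\mathbb{Q}^*$ at the two places $\{p,\infty\}$: the image of $\mathbb{Q}^*$ in $\mathbb{Q}_p^*\times\mathbb{R}^*$ is dense, and in particular surjects onto $\mathbb{Q}_p^*/\mathbb{Q}_p^{*2}\times\mathbb{R}^*/\mathbb{R}^{*2}$. Thus for each choice of sign one can find $a\in\mathbb{Q}^*$ lying in the $\mathbb{Q}_p$-square class of $c_0$ and with the prescribed sign; multiplying such an $a$ by the squares of nonzero rationals preserves both its sign and its $\mathbb{Q}_p$-square class, yielding infinitely many non-represented rationals of each sign. The main obstacle is the local non-universality step: it relies on the classification of quadratic forms over $\mathbb{Q}_p$ and in particular on the uniqueness of the anisotropic four-dimensional form, which I would either sketch as above or cite from a standard reference such as Lam or Cassels.
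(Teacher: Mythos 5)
Your proposal is correct and follows essentially the same route as the paper: both reduce via Hasse--Minkowski to a local obstruction at a finite prime $p$ (forced to be finite by indefiniteness), identify the single non-represented square class in $\mathbb{Q}_p^*$, and then produce infinitely many rationals of either sign lying in that class. The only difference is packaging --- the paper cites Serre's explicit representation criterion, which names the missing class as $-d(\mathrm{f})$ modulo squares, and exhibits $p$-adic squares of both signs by hand, whereas you invoke the uniqueness of the anisotropic quaternary form over $\mathbb{Q}_p$ and weak approximation.
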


\begin{proof}

The Hasse-Minkowski theorem \cite[Chapter 6.3]{La} implies that $\mathrm{f}$ does not represent (over $\mathbb{Q}$) a rational number $q$  if and only if $\mathrm{f}$ does not represent $q$ over a $p$-adic completion $\mathbb{Q}_p$ of $\mathbb{Q}$ (which is $\mathbb{R}$ if $p = \infty$). 

We first deal with the case $q=0$. By \cite[Theorem 6, p.~36]{Se}, a ternary form $\mathrm{f}$ does not represent $0$ over $\mathbb{Q}_p$ if and only if 
\begin{equation}\label{eq:hilbertsymbol}
(-1,-d(\mathrm{f}))\neq\epsilon(\mathrm{f}) \mbox{ (in } \mathbb{Q}^{*}_p/(\mathbb{Q}^{*}_p)^2\mbox{),}
\end{equation} 
where $d(\mathrm{f})$ is the discriminant of $\mathrm{f}$,
$(\circ, \circ)$ denotes the Hilbert symbol, and
$\epsilon(\mathrm{f})$ is the Hasse invariant of the form
$\mathrm{f}$.

Now suppose $q\neq 0$. By the Corollary after \cite[Theorem 6, p.36]{Se} a ternary form $\mathrm{f}$ does \emph{not} represent $q$ over the $\mathbb{Q}_p$ if and only if the following two conditions are satisfied:
\begin{gather}\label{eq:condition1} 
q = -d(\mathrm{f}) \mbox{ (as elements of } \mathbb{Q}^{\ast}_p/(\mathbb{Q}^{\ast}_p)^2\mbox{),} \\
\label{eq:condition2} 
(-1,-d(\mathrm{f}))\neq \epsilon(\mathrm{f}).
\end{gather} 

Since $\mathrm{f}$ is anisotropic, the latter condition \eqref{eq:condition2} (which does not depend on the choice of $q$) is satisfied in some $\mathbb{Q}_p$ because of \eqref{eq:hilbertsymbol}. Also, since $\mathrm{f}$ is indefinite, we have that $p \neq \infty$. 

It remains to show that, given a prime number $p$, we can always find both a positive and a negative rational number $q$ such that the former condition \eqref{eq:condition1} is satisfied.

First, let us show that one can realise both positive and negative integers as squares in $\mathbb{Q}_p$. Let $n$ be any integer which is co-prime with $p$. Such a number $n$ is a $p$-adic unit in $\mathbb{Z}_p$, \emph{i.e.} an invertible element of the ring of $p$-adic integers.

Suppose $p\neq 2$ first. By \cite[Theorem 3, p.~17 ]{Se}, $n$ as above is a square in $\mathbb{Q}_p$ if and only if its image in $U/U_1=F^{\ast}_p$ is a square. Here, $U = \mathbb{Z}^*_p$ is the group of $p$-adic units, $U_1 = 1 + p\, \mathbb{Z}_p$ and $F_p = \mathbb{Z}/p \mathbb{Z}$. However, the latter is simply the first term in the $p$-adic expansion of $n$. It is therefore sufficient to choose $n$ such that it is a quadratic residue in $F^{\ast}_p$, and there are clearly both positive and negative choices for such an integer $n$.

If $p=2$, by \cite[Theorem 4, p.~18]{Se} in order for a $2$-adic unit $n$ to be a square it is necessary and sufficient that $n = 1\, (\text{mod}\; 8)$ or, equivalently, that the third term in the $2$-adic expansion of $r$ is $1$. Once more, we can clearly pick $n$ to be either positive or negative.

This proves that for every prime number $p$ there are infinitely many integers of either sign, which are realised as squares in $\mathbb{Q}_p$.

Finally, we choose a rational number $q$ (positive or negative) such that $-q/d$ is a square in $\mathbb{Q}_p$ so that \eqref{eq:condition1} is satisfied. Therefore $\mathrm{f}$ does not represent $q$ over $\mathbb{Q}_p$ and, by Hasse-Minkowski, over $\mathbb{Q}$. \end{proof}

\begin{rem}
The argument above actually shows that if the coefficients of the form $\mathrm{f}$ are integers, so that also the determinant $d$ is, then it is possible to find both positive and negative \textit{integers} not represented by the form $\mathrm{f}$.
\end{rem}

\vspace*{0.25in}

\begin{table}[!hb]
\centering
\begin{tabular}{lll}
\begin{tabular}[c]{@{}l@{}}\it Alexander Kolpakov\\ \it Institut de Math\'{e}matiques\\ \it Universit\'{e} de Neuch\^{a}tel\\ \it Rue Emile-Argand 11\\ \it CH-2000 Neuch\^{a}tel\\ \it Suisse/Switzerland\\ \it kolpakov.alexander(at)gmail.com\end{tabular} & \begin{tabular}[c]{@{}l@{}} \it Alan Reid\\ \it Department of Mathematics\\ \it Rice University\\
\it 6100 Main Street\\ \it Houston, TX 77005\\ \it USA\\ \it alan.reid(at)rice.edu\end{tabular} & \begin{tabular}[c]{@{}l@{}} \it Leone Slavich\\ \it Department of Mathematics\\ \it University of Pisa\\ \it Largo Pontecorvo 5\\ \it I-56127 Pisa\\ \it Italy\\ \it leone.slavich(at)gmail.com\end{tabular}
\end{tabular}
\end{table}

\end{document}